\documentclass{amsart}

\usepackage{amsmath}
\usepackage{amssymb}
\usepackage{mathrsfs}
\usepackage{amsthm}
\usepackage{comment}

\title{On $\H_Y$-ideals}

\author[M. Badie]{Mehdi Badie}
\address{Jundi-Shapur University of Technology, Dezful, Iran}
\email{badie@jsu.ac.ir}

\subjclass[2010]{Primary 13Axx; Secondary 54C40}

\keywords{$z$-ideal, $z^\circ$-ideal, strong $z$-ideal, strong $z^\circ$-ideal, $\H_Y$-ideal, strong $ \H_Y $-ideal, Hilbert ideal, rings of continuous functions}

\theoremstyle{plain}
\newtheorem{Thm}{Theorem}[section]

\newtheorem{Def}[Thm]{Definition}
\newtheorem{Pro}[Thm]{Proposition}
\newtheorem{Cor}[Thm]{Corollary}
\theoremstyle{definition}

\newcommand{\cal}{\mathcal}

\newcommand{\N}{\mathbb{N}}

\newcommand{\Ge}[1]{\big< #1 \big>}
\renewcommand{\H}{\mathcal{H}}
\newcommand{\ff}{if and only if }
\newcommand{\Mi}{\mathrm{Min}}
\newcommand{\Ma}{\mathrm{Max}}
\newcommand{\An}{\mathrm{Ann}}
\newcommand{\Sp}{\mathrm{Spec}}
\newcommand{\Rad}{\mathrm{Rad}}

\newcommand{\maxl}{\mathrm{maxl}}

\begin{document}

\begin{abstract}
	In this article, we continue the studying of $ \H_Y $-ideals. We introducing two notions fixed and free $ H_Y $-ideals as an extension of fixed and free $ z $-ideals in $ C(X) $ and relative $ \H_Y $-ideals as an extension of relative $ z $-ideals. It has been shown that a large amount of the results of the  mentioned papers and generally the papers in the literature about these topics, are special cases of the results of this paper. We prove that $ Y $ is compact \ff every proper $ \H_Y $-ideal is a fixed $ \H_Y $-ideal; \ff every proper strong $ \H_Y $-ideal is a fixed ideal. Also,  we show that every proper ideal is a relative $ \H_Y $-ideal, \ff every proper ideal is a strong $ \H_Y $-ideal; \ff $ R $ is regular.	
\end{abstract}

\maketitle

\section{Introduction}

The concept of $z$-ideal, was first studied in the rings of continuous functions as an ideal $I$ of $C(X)$ that $Z(f)\subseteq Z(g)$ and $f\in I$ implies that $g\in I$, see \cite{gillman1960rings}. Then this concept was studied more generally for the commutative rings, in \cite{mason1973z}, as an ideal $I$ of $R$ that whenever two elements of $R$ are contained in the same family of maximal ideals and $I$ contains one of them, then it follows that $I$ contains the other one. If we  use $(Z(f))^{\circ}\subseteq (Z(g))^{\circ}$ instead of the above inclusion relation and the minimal prime ideals instead of the maximal ideals in the above definitions,  then we obtain the concept of $z^{\circ}$-ideal ($d$-ideal) in $C(X)$ and the commutative rings, which are introduced and carefully studied in  \cite{azarpanah1999z,azarpanah2000ideals,huijsmans1980z}. The concepts of $z$-ideal and $z^{\circ}$-ideal can be generalized to the concepts of $sz$-ideal and $sz^{\circ}$-ideal ($\xi$-ideal), respectively, based on the finite subsets of the ideals instead of the single points in the ideal, and are studied in \cite{aliabad2011sz,Artico1981,mason1973z}.

The concepts of $\H_Y$-ideals and the strong $\H_Y$-ideals are generalizations of $ z $-ideals, strong $ z $-ideals, $ z^\circ $-ideals and strong $ z^\circ $-ideals. These concepts have been introduced and carefully studied in \cite{badie2019extension}. It has been shown that a large amount of the results of the above mentioned papers and generally the papers in the literature about these topics, are special cases of the results of \cite{badie2019extension}. In this paper, we continue the studying of these concepts.  

In the next section we recall some pertinent definitions. In Section 3, we introduce and study the fixed and free $ \H_Y $-ideals and free and fixed $ \H_Y $-ideals respect to a subset of $ Y $. In this section we show that $ Y $ is compact \ff every proper $ \H_Y $-ideal is a fixed $ \H_Y $-ideal; \ff every proper strong $ \H_Y $-ideal is a fixed ideal. Also, we give a compactification of $ Y $. The Section 4, devoted to introducing and studying of relative $ \H_Y $-ideal as an extension of relative $ z $-ideals. In this section, we show that every proper ideal is a relative $ \H_Y $-ideal, \ff every proper ideal is a strong $ \H_Y $-ideal; \ff $ R $ is regular.  

\section{Prelementary}

In this article, any ring $ R $ is commutative with unity.  A semi-prime ideal is an ideal which is an intersection of prime ideals. For each ideal $ I $ of $ R $ and each element $ a $ of $ R $, we denote the ideal $ \{ x \in R : ax \in I\}$ by $ ( I : a ) $. When $ I = \{ 0 \}  $ we write $ \An(a) $ instead of $(\{0\}:a)$ and call this the annihilator of $a$. If $ \An(a) $ is maximal in the set of all annihilator of nonzero elements of $R$, then $ \An(a)$ is a prime ideal, and it is called an affiliated prime ideal. A prime ideal $ P $ containing an ideal $I$ is said to be a minimal prime  over $ I $, if there are not prime ideals strictly contained in $ P $ that contains $ I $. $\Sp(R)$, $\Mi(R)$, and $\Rad(R)$ denote the set of all prime ideals, all minimal prime 	ideals, all maximal ideals of $R$  and their intersections, respectively. By $\Mi(I)$ we mean the set of minimal prime ideals of $I$. In fact $\Mi((0))=\Mi(R)$. A ring $R$ is said to be reduced if  $\Rad(R)=(0)$.

A prime ideal $ P $ is called a Bourbaki associated prime divisor of an ideal $ I $ if $ ( I‌ : x ) = P $, for some $ x \in R $. We denote the set of all Bourbaki associated prime divisors of an ideal $ I $ by $ \cal{B}(I) $. We use $ \cal{B}(R) $ instead of $ \cal{B}(\{0\}) $. A representation $ I = \bigcap_{P‌ \in \cal{P}} P $ of $ I $ as an intersection of prime ideals is called irredundant if no $ P \in \cal{P} $ may be omitted. Let $ I $ be a semi-prime ideal, $ P_\circ \in \Mi(I) $ is called irredundant with respect to $ I $,  if $ I \neq \bigcap_{ P_\circ \neq P \in \Mi(I) } P$. If $ I $ is equal to the intersection of all irredundant ideals with respect to $ I $, then we call $ I $ a fixed-place ideal, exactly, by \cite[Theorem 2.1]{aliabad2013fixed}, we have $ I = \bigcap \cal{B}(I) $.

In this paper, all $Y\subseteq \Sp(R)$ is considered by Zariski topology; i.e.,  by assuming as a base for the closed sets of $Y$, the sets $h_Y(a)$ where $h_Y(a)=\{ P\in Y:a\in P\}$. Hence, closed sets of $Y$ are of the form $h_Y(I)=\bigcap_{a\in I}h_Y(a)=\{P\in Y: I\subseteq P\}$, for some ideal $I$ in $R$. Also, we set $h_Y^c(I)=Y\backslash h_Y(I)$. For any subset $S$ of $Y$, we show the kernel of $S$ by $k(S)=\bigcap_{P\in S}P$ and  we have $\overline{S}=cl_Y S=h_Yk(S)$. When $Y=\Sp(R)$, we omit the index $Y$ and when $Y=\Ma(R)$ ($Y=\Mi(R)$) we write  $M$ (m) instead of $Y$ in the index. By these notations, for every $S\subseteq R$, we can use the notations $kh_m(S)$ and $kh_M(S)$ instead of $P_S$ and $M_S$ (which is usually used in the context of $C(X)$), respectively.

The reader is referred to \cite{atiyah1969introduction}, \cite{gillman1960rings}, \cite{lam1991first} and \cite{stephen1970general} for undefined terms and notations. 

Let $R$ be a ring, $Y \subseteq \Sp(R)$ and $I$ be an ideal of $R$. Then, by \cite[Proposition3.2]{badie2019extension} the following are equivalent:

\begin{itemize}
	\item[(a)] For every $a\in I$ and  $S\subseteq R$, it follows from $h_Y(a)\subseteq h_Y(S)$ that $S\subseteq I$.
	\item[(b)] For every $a\in I$ and  $S\subseteq R$, it follows from $h_Y(a)=h_Y(S)$ that $S\subseteq I$.
	\item[(c)] For every $a\in I$ and  $b\in R$, it follows from $ h_Y(a)=h_Y(b)$ that $b\in I$.
	\item[(d)] For every $a\in I$ and  $b\in R$, it follows from $h_Y(a)\subseteq h_Y(b)$ that $b\in I$.
	\item[(e)]  If $a\in I$, then $kh_Y(a)\subseteq I$.
	\item[(f)] For every $a\in I$ and  $S\subseteq R$, it follows from $kh_Y(S)\subseteq kh_Y(a)$ that $S\subseteq I$.
	\item[(g)] For every $a\in I$ and  $S\subseteq R$, it follows from $kh_Y(S)=kh_Y(a)$ that $S\subseteq I$.
	\item[(h)] For every $a\in I$ and  $b\in R$, it follows from $ kh_Y(b)=kh_Y(a)$ that $b\in I$.
	\item[(k)] For every $a\in I$ and  $b\in R$, it follows from $kh_Y(b)\subseteq kh_Y(a)$ that $b\in I$.
\end{itemize}

An ideal $I$ of $R$ is said to be an $\H_Y$-ideal if it satisfies in the above equivalent conditions.

Let $R$ be a ring, $Y \subseteq \Sp(R)$ and $I$ be an ideal of $R$. Then, by \cite[Proposition 3.4]{badie2019extension}, the following are equivalent:

\begin{itemize}
	\item[(a)] For every finite subset $F$‌ of $I$ and every $S\subseteq R$, it follows from  $h_Y(F)=h_Y(S)$ that $S\subseteq I$.
	\item[(b)] For every finite subset $F$‌ of $I$ and every finite subset $G$ of $R$, it follows from  $h_Y(F)=h_Y(G)$ that $G\subseteq I$.
	\item[(c)] For every finite subset $F$‌ of $I$ and every finite subset $G$  of $R$, it follows from  $h_Y(F)\subseteq h_Y(G)$ that $G\subseteq I$.
	\item[(d)] It follows from  $h_Y(a)\in \H_Y(I)$  that $a\in I$.
	\item[(e)] For every finite subset $F$ ‌of $R$, it follows from  $h_Y (F)\in \H_Y (I)$ that $F\subseteq I$.
	\item[(f)] For every finite subset $F$ of $I$ and $a\in R$‌, it follows from  $h_Y(F)=h_Y(a)$ that $a\in I$.
	\item[(g)] For every finite subset $F$ of $I$ and $a\in R$, it follows from  $h_Y(F)\subseteq h_Y(a)$ that $a\in I$.
	\item[(k)] For every finite subset $F\subseteq I$, we have $ kh_Y(F)\subseteq I$.
	\item[(l)] For every finite subset $F$ of $I$ and $a\in R$‌, it follows from  $kh_Y(a)=kh_Y(F)$ that $a\in I$.
	\item[(m)] For every finite subset $F$ of $I$ and $a\in R$, it follows from  $kh_Y(a)\subseteq kh_Y(F)$ that $a\in I$.
	\item[(n)] For every finite subset $F$ of $I$ and any $S\subseteq R$‌, it follows from  $kh_Y(S)=kh_Y(F)$ that $S\subseteq I$.
	\item[(o)] For every finite subset $F$ of $I$ and any $S\subseteq R$, it follows from  $kh_Y(S)\subseteq kh_Y(F)$ that $S\subseteq I$.
\end{itemize}

An ideal $I$ of $R$ is said to be a strong $\H_Y$-ideal if it satisfies in the above equivalent conditions. An ideal $I$ of $R$ is called a $Y$-Hilbert ideal, if $I$ is  an intersection of elements of some subfamily of $Y$; i.e., $I=kh_Y(I)$.  By $\maxl(E)$, we mean the set of all maximal elements of $E$. Also, we denote by $S\H_Y({\cal{A}})$ ($PS\H_Y({\cal{A}})$) the set of all strong $H_Y$-ideals (proper strong $H_Y$-ideals) of $A$. See \cite{badie2019extension}, for more information about the $ \H_Y $-ideals and strong $ \H_Y $-ideals.

\section{$\H_Y$-fixed ideals and $\H_Y$-free ideals}

In this section, we introduce and study the fixed and free $ \H_Y $-ideals and free and fixed $ \H_Y $-ideals respect to a subset of $ Y $ as extensions of fixed and free ideals in $ C(X) $. Also, we give some equivalent properties to compaction of $ Y $

\begin{Def}
	Let $Y \subseteq \Sp(R)$ and $I$ be an ideal of $R$. Then $I$ is called an $\H_Y$-fixed ideal, if $\H_Y(I)$ is a fixed $\H_Y$-filter; i.e. $\bigcap \H_Y(I) \neq \emptyset$. If $I$ is not an $\H_Y$-fixed ideal, then $I$ is called $\H_Y$-free ideal.
\end{Def}

\begin{Pro}
	Let $Y \subseteq \Sp(R)$ and $I$ be an ideal of $R$. Then the following statements hold.
	\begin{itemize}
		\item[(a)] $\bigcap \H_Y(I) =h_Y(I)$.
		\item[(b)] $I$ is an $\H_Y$-fixed ideal \ff $h_Y(I)\neq\emptyset$.
	\end{itemize}
	\label{intersection of fixed}
\end{Pro}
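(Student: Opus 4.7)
The plan is to unpack the definition of $\H_Y(I)$ (from \cite{badie2019extension}) and reduce both statements to the basic identity $h_Y(I) = \bigcap_{a \in I} h_Y(a)$ recorded in the preliminaries. Part (b) will then be a one-line corollary of part (a) together with the definition of $\H_Y$-fixed ideal, so the real work is in (a).

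For part (a), I would argue the two inclusions separately. On the one hand, for every $a \in I$, the set $h_Y(a)$ belongs to $\H_Y(I)$ (and, more generally, so does $h_Y(F)$ for every finite $F \subseteq I$). Hence
\[
\bigcap \H_Y(I) \;\subseteq\; \bigcap_{a \in I} h_Y(a) \;=\; h_Y(I),
\]
the last equality being the definition $h_Y(I) = \{P \in Y : I \subseteq P\} = \bigcap_{a\in I}\{P \in Y : a \in P\}$ from the preliminaries. Conversely, each generator of $\H_Y(I)$ is of the form $h_Y(F)$ for some finite subset $F = \{a_1,\ldots,a_n\}$ of $I$, and $h_Y(F) = \bigcap_{i=1}^{n} h_Y(a_i)$ clearly contains $h_Y(I)$. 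Since the $\H_Y$-filter $\H_Y(I)$ is generated, under finite intersections and supersets, by these basic sets, every member of $\H_Y(I)$ contains $h_Y(I)$, so $\bigcap \H_Y(I) \supseteq h_Y(I)$.

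Part (b) is then immediate from the definition: $I$ is $\H_Y$-fixed iff $\bigcap \H_Y(I) \neq \emptyset$, which by (a) is equivalent to $h_Y(I) \neq \emptyset$.

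There is no real obstacle here; the only thing to be careful about is recalling the precise definition of $\H_Y(I)$ and confirming that its members are exactly the sets $h_Y(F)$ with $F$ a finite subset of $I$, together with their supersets in the appropriate ambient collection — once this is in hand, the proposition is a direct consequence of the elementary identity $h_Y(I) = \bigcap_{a \in I} h_Y(a)$, with no genuine ring-theoretic or topological content.
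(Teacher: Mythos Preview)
Your proof is correct and follows essentially the same approach as the paper: both reduce part (a) to the elementary identity $h_Y(I)=\bigcap_{F}h_Y(F)$, where $F$ ranges over finite subsets of $I$, and both dispatch (b) as an immediate consequence. The only cosmetic difference is that the paper packages (a) as a single chain of equivalences $P\in h_Y(I)\Leftrightarrow I\subseteq P\Leftrightarrow(\forall\text{ finite }F\subseteq I)\,F\subseteq P\Leftrightarrow P\in\bigcap\H_Y(I)$, whereas you split it into two inclusions; also, in this paper $\H_Y(I)$ is exactly the collection $\{h_Y(F):F\text{ a finite subset of }I\}$, so your hedge about supersets is unnecessary (though harmless).
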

\begin{proof} (a). Let $F_I$ be the set of all finite subsets of $I$, then we can write 
	
	\[ P \in h_Y(I) \quad \Leftrightarrow \quad I\subseteq P \quad \Leftrightarrow \quad \forall S\in F_I, \quad S\subseteq P \]
	\[ \Leftrightarrow \quad P\in\bigcap_{S\in F_I}h_Y(S)=\bigcap \H_Y(I)\quad \Leftrightarrow \quad P\in\bigcap \H_Y(I).\]
	
	(b). By (a), it is clear.
\end{proof}

Now we can conclude the following corollary from the above proposition.

\begin{Cor}
	Let $Y \subseteq \Sp(R)$ and $I$ be an ideal of $R$. The following hold.
	\begin{itemize}
		\item[(a)] If $I$ is a $Y$-Hilbert ideal, then $I$ is a fixed $\H_Y$-ideal.
		\item[(b)]  $I$ is $\H_Y$-fixed (resp., $\H_Y$-free) \ff $\H_Y^{-1} \H_Y(I)$ is an $\H_Y$-fixed ideal ($\H_Y$-free ideal).
		\item[(c)] $ \H_Y^{-1} \H_Y(I) = R $ \ff $ I \not\subseteq \bigcup Y $.
	\end{itemize}
	\label{H_Y Hilbert is fixed}
\end{Cor}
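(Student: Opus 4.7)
My plan is to prove the three parts in sequence, each time reducing to Proposition~\ref{intersection of fixed}(b) after some bookkeeping with the operators $h_Y$, $k$, $\H_Y$, and $\H_Y^{-1}$.

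For (a), the $Y$-Hilbert hypothesis $I=kh_Y(I)$ is exactly condition (e) from the preliminary characterization of $\H_Y$-ideals: for every $a\in I$ one has $kh_Y(a)\subseteq kh_Y(I)=I$, so $I$ is an $\H_Y$-ideal. For the ``fixed'' part, Proposition~\ref{intersection of fixed}(b) requires $h_Y(I)\neq\emptyset$, which is automatic for a proper $I$, since otherwise $I=kh_Y(I)=k(\emptyset)=R$.

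For (b), I would prove the identity $h_Y(I)=h_Y(\H_Y^{-1}\H_Y(I))$; applying Proposition~\ref{intersection of fixed}(b) on both sides then yields the equivalence for both the fixed and the free case. One containment comes from $I\subseteq \H_Y^{-1}\H_Y(I)$, which is immediate from the definitions. For the reverse, fix $P\in h_Y(I)$ and $a\in\H_Y^{-1}\H_Y(I)$; by definition there is a finite $F\subseteq I$ with $h_Y(F)\subseteq h_Y(a)$, and $F\subseteq I\subseteq P$ forces $P\in h_Y(F)\subseteq h_Y(a)$, hence $a\in P$ and therefore $\H_Y^{-1}\H_Y(I)\subseteq P$.

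For (c), I plan to unpack $\H_Y^{-1}\H_Y(I)=R$ as $1\in\H_Y^{-1}\H_Y(I)$, which by the description of $\H_Y^{-1}$ is equivalent to the existence of a finite $F\subseteq I$ with $h_Y(F)\subseteq h_Y(1)=\emptyset$, i.e., $h_Y(F)=\emptyset$. The backward direction of the claimed equivalence is then straightforward: if $a\in I\setminus\bigcup Y$ then $h_Y(\{a\})=\emptyset$, so taking $F=\{a\}$ does the job. The forward direction---manufacturing, from a finite $F=\{a_1,\dots,a_n\}\subseteq I$ with empty common $h_Y(F)$, a single element of $I$ lying outside every $P\in Y$---is the principal obstacle. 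I would approach it by noting that the finitely generated subideal $\langle a_1,\dots,a_n\rangle\subseteq I$ is not contained in any $P\in Y$, and then applying a prime-avoidance-style argument to combine the $a_i$ into an element of $\langle a_1,\dots,a_n\rangle\subseteq I$ that avoids $\bigcup Y$; this is the step where I expect to have to work hardest.
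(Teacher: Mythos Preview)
Your arguments for (a) and (b) are correct and match the intended quick derivation from Proposition~\ref{intersection of fixed}; in particular, the identity $h_Y(I)=h_Y\bigl(\H_Y^{-1}\H_Y(I)\bigr)$ is exactly the right bridge for (b). The backward direction of (c) is also fine.

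The forward direction of (c) is where your proposal has a genuine gap, and it is not one that more effort will close. Prime avoidance is a statement about \emph{finitely} many primes, whereas $Y$ is an arbitrary subset of $\Sp(R)$; there is no infinite prime-avoidance lemma available here. In fact the forward implication, read literally, is false. Take $R=k[x,y]$ over a field $k$, set
\[
Y=\{(p):p\in R\text{ irreducible and }p(0,0)=0\}\subseteq\Sp(R),
\]
and let $I=(x,y)$. For any nonzero $f\in I$ we have $f(0,0)=0$, so some irreducible factor $p$ of $f$ satisfies $p(0,0)=0$; hence $f\in(p)\in Y$ and therefore $I\subseteq\bigcup Y$. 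On the other hand $h_Y(\{x,y\})=\emptyset$ because no irreducible divides both $x$ and $y$, so $\emptyset\in\H_Y(I)$ and $1\in\H_Y^{-1}\H_Y(I)$, i.e.\ $\H_Y^{-1}\H_Y(I)=R$. Thus ``there exists a finite $F\subseteq I$ with $h_Y(F)=\emptyset$'' is strictly weaker than ``there exists $a\in I$ with $h_Y(a)=\emptyset$'', and your reduction of the forward direction to the latter cannot be completed. The paper itself gives no argument beyond invoking Proposition~\ref{intersection of fixed}, so the obstruction you sensed is real rather than a matter of missing a trick.
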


It is clear that every $\H_Y$-fixed ideal is proper. Moreover the following  corollary is immediate.

\begin{Cor}
	Let $Y \subseteq \Sp(R)$ and $I$ be an ideal of $R$. Then $I$ is a maximal $\H_Y$-fixed ideal \ff $I\in \maxl(Y)$.
	\label{maximal fixed}
\end{Cor}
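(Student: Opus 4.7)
The plan is to exploit two facts: the definitional reformulation $I$ is $\H_Y$-fixed iff $h_Y(I)\ne\emptyset$ from Proposition \ref{intersection of fixed}(b), and the observation that each element of $Y$ is itself a fixed $\H_Y$-ideal. The latter is immediate from Corollary \ref{H_Y Hilbert is fixed}(a): for $P\in Y$, one has $P\subseteq Q$ for every $Q\in h_Y(P)$ and also $P\in h_Y(P)$, so $P=kh_Y(P)$, i.e. $P$ is a $Y$-Hilbert ideal, hence a fixed $\H_Y$-ideal. This gives a supply of fixed $\H_Y$-ideals against which a purported maximal one can be compared.

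For the forward direction, assume $I$ is a maximal $\H_Y$-fixed ideal. By Proposition \ref{intersection of fixed}(b), $h_Y(I)\ne\emptyset$, so pick $P\in Y$ with $I\subseteq P$. By the preceding paragraph $P$ is itself a fixed $\H_Y$-ideal, and by maximality of $I$ we get $I=P$, so $I\in Y$. To see $I\in\maxl(Y)$, suppose $I\subseteq Q$ with $Q\in Y$; then $Q$ is also a fixed $\H_Y$-ideal containing $I$, and maximality forces $I=Q$.

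For the reverse direction, assume $I\in\maxl(Y)$. Then $I\in Y$, so $I$ is a fixed $\H_Y$-ideal by the opening observation. Let $J$ be any fixed $\H_Y$-ideal with $I\subseteq J$. Applying Proposition \ref{intersection of fixed}(b) to $J$ yields some $Q\in Y$ with $J\subseteq Q$; then $I\subseteq Q\in Y$ and maximality of $I$ in $Y$ force $I=Q$, hence $I=J$.

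There is essentially no hard step: once one notices that elements of $Y$ themselves serve as fixed $\H_Y$-ideals, both directions reduce to pushing an inclusion $I\subseteq J$ (or $I\subseteq Q$) up into $Y$ via the defining condition $h_Y(-)\ne\emptyset$ and then collapsing it by the relevant maximality hypothesis. The only subtlety worth flagging is terminological — making sure ``maximal $\H_Y$-fixed ideal'' is read as maximal among $\H_Y$-fixed ideals (not merely maximal ideals that happen to be $\H_Y$-fixed) — but this is just a matter of unpacking the definition.
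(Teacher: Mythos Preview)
Your proof is correct and matches the approach the paper has in mind: the corollary is stated as ``immediate'' without a written proof, and the intended argument is precisely the one you give --- use Proposition~\ref{intersection of fixed}(b) to identify $\H_Y$-fixed ideals with those $I$ satisfying $h_Y(I)\ne\emptyset$, observe that each $P\in Y$ is itself $\H_Y$-fixed, and then sandwich using the relevant maximalities. One minor remark: your detour through Corollary~\ref{H_Y Hilbert is fixed}(a) (showing $P\in Y$ is $Y$-Hilbert) is not needed --- the bare observation $P\in h_Y(P)$ already gives $h_Y(P)\ne\emptyset$, which is all you use.
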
 

Note that a maximal fixed $\H_Y$-filter is a also an $\H_Y$-ultrafilter. Therefore, $I$ is a maximal $\H_Y$-fixed ideal \ff $\H_Y(I)$ is a fixed $\H_Y$-ultrafilter. Hence, clearly, an $\H_Y$-filter $\mathscr{U}$ is a fixed  $\H_Y$-ultrafilter \ff $\H_Y^{-1}(\mathscr{U})$ is a  maximal element of $Y$.

\begin{Thm}
	Let $ Y \subseteq \Sp(R) $. The following statements are equivalent.
	\begin{itemize} 
		\item[(a)] Every proper $\H_Y$-ideal is a fixed $\H_Y$-ideal.
		\item[(b)] Every proper strong $\H_Y$-ideal is a fixed $\H_Y$-ideal.
		\item[(c)] Every ideal contained in $ \bigcup Y $ is an $\H_Y$-fixed ideal.
		\item[(d)] $ \maxl(PS\H_Y) \subseteq Y $.
		\item[(e)] $ Y $ is compact.
	\end{itemize}		
	\label{compact and fixed H-Y}
\end{Thm}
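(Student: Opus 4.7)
I would prove the five implications cyclically along $(a) \Rightarrow (b) \Rightarrow (d) \Rightarrow (e) \Rightarrow (c) \Rightarrow (a)$. Throughout, the key tools are the characterisation that $I$ is $\H_Y$-fixed iff $h_Y(I) \neq \emptyset$ (from the preceding proposition), and the routine observation that every $P \in Y$ is itself a proper strong $\H_Y$-ideal: for a finite $F \subseteq P$ one has $P \in h_Y(F)$, so $h_Y(F) \subseteq h_Y(a)$ forces $a \in P$.

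The step $(a) \Rightarrow (b)$ is immediate, since setting $F = \{a\}$ in the strong condition recovers the ordinary $\H_Y$-condition. For $(b) \Rightarrow (d)$, given $J \in \maxl(PS\H_Y)$, (b) places $J$ inside some $P \in Y$; since $P \in PS\H_Y$ and $J$ is maximal there, $J = P \in Y$. For $(d) \Rightarrow (e)$, given a basic family $\{h_Y(a_\alpha)\}$ with FIP, set $I = \langle a_\alpha\rangle$ and consider its strong $\H_Y$-closure $\tilde I$. Any finite $F \subseteq I$ sits inside $\langle a_{\alpha_1}, \ldots, a_{\alpha_N}\rangle$, so $h_Y(F) \supseteq \bigcap_i h_Y(a_{\alpha_i})$, which is nonempty by FIP; hence $\tilde I$ is proper, and Zorn extends it to some $J \in \maxl(PS\H_Y) \subseteq Y$, giving $J \in h_Y(I) = \bigcap_\alpha h_Y(a_\alpha)$ and thus compactness.

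For $(e) \Rightarrow (c)$, take $I \subseteq \bigcup Y$ and use Zorn to extend $I$ to an ideal $J^* \supseteq I$ maximal among ideals contained in $\bigcup Y$. A standard multiplicative argument shows $J^*$ is prime: if $ab \in J^*$ while $a, b \notin J^*$, then both $J^* + (a)$ and $J^* + (b)$ must contain an element outside $\bigcup Y$, yet their product lies in $J^* \subseteq \bigcup Y$, contradicting primeness of any prime of $Y$ that absorbs it. It then suffices to show $h_Y(J^*) \neq \emptyset$. If not, compactness on the cover $\{h_Y^c(a) : a \in J^*\}$ of $Y$ produces finite $a_1, \ldots, a_n \in J^*$ with $\bigcap_i h_Y(a_i) = \emptyset$; each $a_i$ lies in some $P_i \in Y$, and classical prime avoidance on $\langle a_1, \ldots, a_n\rangle \subseteq J^*$ against $\bigcup_i P_i$ produces an element of $J^*$ lying outside $\bigcup_i P_i$. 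Iterating against enlarging finite subsets of $Y$, and using compactness of $Y$ to terminate the transfinite extension, ultimately exhibits an element of $J^* \subseteq \bigcup Y$ that is outside $\bigcup Y$, the desired contradiction. Finally $(c) \Rightarrow (a)$: if $I$ is a proper $\H_Y$-ideal and $a \in I$ had $h_Y(a) = \emptyset = h_Y(1)$, the $\H_Y$-condition with $b = 1$ would give $1 \in I$; hence every $a \in I$ lies in $\bigcup Y$, and (c) applies.

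The main obstacle is the implication $(e) \Rightarrow (c)$. Classical prime avoidance only handles a finite collection of primes at a time, so the passage from ``$J^* \not\subseteq P$ for any single $P \in Y$'' to ``some element of $J^*$ avoids the entire union $\bigcup Y$'' demands a delicate interplay between compactness of $Y$ and a Zorn-style exhaustion of the prime-avoidance family. The remaining implications are essentially bookkeeping once one exploits that each $P \in Y$ provides a ready-made proper strong $\H_Y$-ideal into which any candidate maximal proper strong $\H_Y$-ideal can be absorbed.
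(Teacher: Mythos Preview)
Your cycle is sound at every link except $(e)\Rightarrow(c)$, where there is a genuine gap. You correctly produce a prime ideal $J^*$ with $I\subseteq J^*\subseteq\bigcup Y$, but from $h_Y(J^*)=\emptyset$ you only extract finitely many $a_1,\dots,a_n\in J^*$ with $\bigcap_i h_Y(a_i)=\emptyset$, and then try to ``iterate prime avoidance'' against growing finite subfamilies of $Y$. That iteration does not terminate: at each stage you find an element of $J^*$ avoiding the finitely many primes collected so far, but this element lands in some \emph{new} $P\in Y$, and nothing about compactness of $Y$ (which concerns closed sets, not sequences of points) forces the process to exhaust $Y$ or to stabilise. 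What you are implicitly asking for is infinite prime avoidance---$J^*\not\subseteq P$ for all $P\in Y$ should give $J^*\not\subseteq\bigcup Y$---and that is false in general. You yourself flag this as ``the main obstacle,'' but the sketch offered is not a proof.

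The paper orders the cycle as $(a)\Rightarrow(b)\Rightarrow(c)\Rightarrow(d)\Rightarrow(e)\Rightarrow(a)$ and thereby never needs a direct $(e)\Rightarrow(c)$. Its return step $(e)\Rightarrow(a)$ is the clean one: for a proper $\H_Y$-ideal $I$ the family $\H_Y(I)=\{h_Y(F):F\subseteq I\text{ finite}\}$ is closed under finite intersection and contains no empty set, so compactness gives $\bigcap\H_Y(I)=h_Y(I)\neq\emptyset$. The passage $(b)\Rightarrow(c)$ is then handled via Corollary~\ref{H_Y Hilbert is fixed}: if $I\subseteq\bigcup Y$ then the strong $\H_Y$-closure $\H_Y^{-1}\H_Y(I)$ is a proper strong $\H_Y$-ideal, hence fixed by (b), and $I$ inherits fixedness. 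Your steps $(a)\Rightarrow(b)$, $(b)\Rightarrow(d)$, $(d)\Rightarrow(e)$ and $(c)\Rightarrow(a)$ are all correct; the fix is simply to replace the broken $(e)\Rightarrow(c)$ by $(e)\Rightarrow(b)\Rightarrow(c)$ along the paper's route, using the strong closure rather than an ad hoc prime extension.
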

\begin{proof}
	(a)$ \Rightarrow $(b). It is clear.
	
	(b)$ \Rightarrow $(c). It follows from Corollary \ref{H_Y Hilbert is fixed}.
	
	(c)$ \Rightarrow $(d). Suppose that $M \in \maxl(PS\H_Y)$, then  $M$ is an $\H_Y$-fixed ideal, so $h_Y(M)\ne \emptyset$, by Proposition \ref{intersection of fixed}, so $M\in Y$, consequently $\Ma(R)\subseteq Y$.
	
	(d)$ \Rightarrow $(e). Suppose that $ \{ h_Y(a_\alpha) \}_{\alpha \in A} $ is a family of closed basic with the finite intersection. Then there is a proper $\H_Y$-filter $\mathscr{F}$ containing $ \{ h_Y(a_\alpha) \}_{\alpha \in A} $. Then an ultrafilter $ \mathscr{U} $ on $ Y $ exists such that $ \mathscr{F} \subseteq \mathscr{U} $. Since $ \H_Y^{-1} (\mathscr{U}) \in \maxl(PS\H_Y) $, $ \H_Y^{-1} (\mathscr{U}) $ is $ \H_Y $-fixed and therefore $ \mathscr{U} $ is fixed, thus $ \mathscr{F} $ is fixed, hence $\emptyset \neq \bigcap \mathscr{F} \subseteq \bigcap_{\alpha \in A} h_Y(a_\alpha) $. Consequently $Y$ is compact.
	
	(e)$ \Rightarrow $(a). Since $ \H_Y(I) = \{ h_Y(F) : F \text{ is a finite subset } I \}$ is a family closed sets with the finite intersection property,  It is clear.
\end{proof}

\begin{Cor}
	Suppose that $ Y \subseteq \Mi(R) $ and $ \bigcap Y = \{ 0 \} $. $ Y $ is compact \ff $ \mathcal{B}(R) \subseteq Y $.
\end{Cor}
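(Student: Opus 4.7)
The strategy is to leverage Theorem~\ref{compact and fixed H-Y}, which gives several characterizations of compactness of $Y$ that are much easier to connect to Bourbaki associated primes than the raw topological definition. The first observation I would record is that the joint hypothesis $Y \subseteq \Mi(R)$ and $\bigcap Y = \{0\}$ forces $R$ to be reduced: indeed $\Rad(R) = \bigcap \Mi(R) \subseteq \bigcap Y = \{0\}$. This reducedness will be used on both directions through the identity $\An(x) = \bigcap\{Q \in \Mi(R) : x \notin Q\}$, which is valid in any reduced ring.

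For the direction ``$Y$ compact $\Rightarrow \mathcal{B}(R) \subseteq Y$'', let $P \in \mathcal{B}(R)$, so $P = \An(x)$ is prime for some $x \neq 0$. From the formula above, $P$ is an intersection of minimal primes; since $P$ itself is prime a short argument shows $P$ must coincide with one of them, so $P \in \Mi(R)$. Next, since $x \neq 0 = \bigcap Y$, pick $Q_0 \in Y$ with $x \notin Q_0$; then every $y \in \An(x)$ satisfies $xy = 0 \in Q_0$ and hence $y \in Q_0$, giving $P \subseteq Q_0 \subseteq \bigcup Y$. Now invoke Theorem~\ref{compact and fixed H-Y}(c): compactness of $Y$ makes every ideal contained in $\bigcup Y$ an $\H_Y$-fixed ideal, so $h_Y(P) \neq \emptyset$ by Proposition~\ref{intersection of fixed}(b). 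Picking $Q \in Y$ with $P \subseteq Q$ and noting that both $P$ and $Q$ are minimal primes, the containment forces $P = Q \in Y$.

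For the converse ``$\mathcal{B}(R) \subseteq Y \Rightarrow Y$ compact'', I plan to verify condition (d) of Theorem~\ref{compact and fixed H-Y}: take an arbitrary $M \in \maxl(PS\H_Y)$ and argue that $M \in \mathcal{B}(R)$, so that the hypothesis deposits $M$ into $Y$. One first shows that such a maximal proper strong $\H_Y$-ideal is prime. Then, combining the strong $\H_Y$-property $kh_Y(F) \subseteq M$ for every finite $F \subseteq M$ with maximality, one argues that $M$ is a minimal prime of $R$ which is \emph{irredundant} in the sense of Section~2 (otherwise a strictly larger proper strong $\H_Y$-ideal could be produced using $\bigcap Y = \{0\}$ to supply suitable witnesses). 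Irredundancy of the minimal prime $M$ in the reduced ring $R$ means that $\bigcap_{Q \in \Mi(R), Q \neq M} Q \nsubseteq M$, so any $x$ in this intersection but outside $M$ satisfies $\An(x) = M$, placing $M \in \mathcal{B}(R)$.

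The principal obstacle is the converse direction, concretely the identification of each maximal proper strong $\H_Y$-ideal with an affiliated prime of the zero ideal. Only the forward direction is an essentially bookkeeping consequence of the preceding theorem; in the converse one has to extract, from the purely ideal-theoretic maximality condition, an explicit element of $R$ whose annihilator realizes $M$, and this is where the three hypotheses ($Y \subseteq \Mi(R)$, $\bigcap Y = \{0\}$, and maximality in $PS\H_Y$) must be used in concert.
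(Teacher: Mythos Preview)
The paper's proof is a one-line citation of \cite{aliabad2013fixed} together with Theorem~\ref{compact and fixed H-Y}, so there is no detailed in-paper argument to match against. Your forward direction is correct; indeed, once you pick $Q_0 \in Y$ with $x \notin Q_0$ and note $P = \An(x) \subseteq Q_0$, minimality of $Q_0$ already gives $P = Q_0 \in Y$, so compactness is not even used there.

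The real problem is your converse. You want to show that every $M \in \maxl(PS\H_Y)$ is an irredundant minimal prime and hence lies in $\mathcal{B}(R)$, but this step fails in general. Take $R = \prod_{n \in \N} F$ for a field $F$ and $Y = \{P_n : n \in \N\}$ with $P_n = \{a : a_n = 0\}$; then $Y \subseteq \Mi(R)$, $\bigcap Y = \{0\}$, and since $R$ is von Neumann regular one checks $kh_Y(F) = \Ge{F}$ for every finite $F$, so every ideal is a strong $\H_Y$-ideal and $\maxl(PS\H_Y) = \Ma(R)$. Any free-ultrafilter prime $P_{\mathcal U}$ is then a maximal proper strong $\H_Y$-ideal and a minimal prime, yet it is \emph{not} irredundant with respect to $\{0\}$ (the fixed primes $P_n$ alone already intersect to zero) and is not of the form $\An(x)$ for any $x$. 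Thus $\maxl(PS\H_Y) \not\subseteq \mathcal{B}(R)$, and the ``maximality forces irredundancy'' step cannot be carried out as sketched. Whatever is being imported from \cite{aliabad2013fixed} must supply something beyond the bare verification of condition~(d) in Theorem~\ref{compact and fixed H-Y}; you will need to consult that reference to see what is actually being used.
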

\begin{proof}
	It concludes from \cite{aliabad2013fixed} and Theorem \ref{compact and fixed H-Y}. 
\end{proof}

\begin{Thm}
	Suppose that $ Y \subseteq \Sp(R) $. If $ Z = \maxl(PS\H_Y) \cup Y $, then $ Z $ is a compactification of $ Y $.
\end{Thm}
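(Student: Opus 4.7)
The plan is to topologize $Z = \maxl(PS\H_Y) \cup Y$ by declaring the sets $h_Z(a) = \{I \in Z : a \in I\}$, $a \in R$, to be the basic closed sets, exactly mimicking the construction of the Zariski topology on $Y$. Since $h_Z(a) \cap Y = h_Y(a)$, the inclusion $Y \hookrightarrow Z$ is a topological embedding, so I only need to verify two things: that $Y$ is dense in $Z$, and that $Z$ is compact.

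For density, the crucial observation is that every proper strong $\H_Y$-ideal $I$ contains $\bigcap Y$: applying condition (k) of the strong $\H_Y$-ideal characterization to the empty subset $\emptyset \subseteq I$ yields $kh_Y(\emptyset) = \bigcap Y \subseteq I$. Hence if $U = Z \setminus h_Z(a)$ is a nonempty basic open set of $Z$ and $I \in U$, then $a \notin I$ forces $a \notin \bigcap Y$, so some $P \in Y$ satisfies $a \notin P$, and this $P$ lies in $U \cap Y$. Thus $Y$ meets every nonempty basic open subset of $Z$.

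For compactness, I will argue via the finite intersection property. Let $\{h_Z(a_\alpha)\}_{\alpha \in A}$ have FIP in $Z$. The first step is to show that the family $\{h_Y(a_\alpha)\}_{\alpha \in A}$ also has FIP in $Y$: for a finite $F^* \subseteq \{a_\alpha\}$, pick any $I \in h_Z(F^*)$; if $h_Y(F^*)$ were empty then by the convention on empty intersections $kh_Y(F^*) = R$, and the two alternatives $I \in Y$ (contradicting the emptiness of $h_Y(F^*)$ directly) and $I \in \maxl(PS\H_Y)$ (forcing $1 \in kh_Y(F^*) \subseteq I$ via condition (k), against the properness of $I$) both yield contradictions. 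The second step reproduces the strategy from the proof of Theorem \ref{compact and fixed H-Y}(d)$\Rightarrow$(e): the $\H_Y$-filter generated by $\{h_Y(a_\alpha)\}$ is proper, so extends to an ultrafilter $\mathscr{U}$ on $Y$, whence $M := \H_Y^{-1}(\mathscr{U}) \in \maxl(PS\H_Y) \subseteq Z$ contains every $a_\alpha$ and lies in $\bigcap_\alpha h_Z(a_\alpha)$.

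The main obstacle I expect is the bookkeeping around the correspondence between $\H_Y$-ultrafilters and maximal proper strong $\H_Y$-ideals, which was used implicitly in Theorem \ref{compact and fixed H-Y} and is drawn from \cite{badie2019extension}; one needs that $\H_Y^{-1}(\mathscr{U})$ is a proper strong $\H_Y$-ideal whose image under $\H_Y$ is exactly $\mathscr{U}$, and that it is indeed a maximal element of $PS\H_Y$. A subsidiary point worth verifying explicitly is that $\{h_Z(a)\}_{a \in R}$ genuinely forms a base for the closed sets of a topology on $Z$, which ultimately rests on the elements of $\maxl(PS\H_Y)$ being prime ideals, itself a consequence of the ultrafilter identification via $h_Y(ab) = h_Y(a) \cup h_Y(b)$.
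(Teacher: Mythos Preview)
Your argument is correct. The paper's own proof is a one-liner: it asserts that $Y$ is dense in $Z$ and that $\maxl(PS\H_Z) \subseteq \maxl(PS\H_Y) \subseteq Z$, and then compactness of $Z$ is read off directly from Theorem~\ref{compact and fixed H-Y} via criterion~(d) applied with $Z$ in place of $Y$. So where the paper simply invokes the preceding theorem, you have in effect unfolded its (d)$\Rightarrow$(e) implication in the particular case of $Z$, carrying out the $\H_Y$-filter/ultrafilter construction by hand. The paper's route is shorter but leaves both the density claim and the inclusion $\maxl(PS\H_Z) \subseteq \maxl(PS\H_Y)$ unjustified; your density argument---that every proper strong $\H_Y$-ideal contains $\bigcap Y$---is precisely the ingredient which, combined with condition~(k) for strong $\H_Y$-ideals, gives $kh_Z(F) = kh_Y(F)$ for every finite $F$ and hence $PS\H_Z = PS\H_Y$, so your proof actually supplies the paper's missing step as a byproduct. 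In short, you trade brevity for explicitness and self-containment.
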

\begin{proof}
	$ Y $ is dense in $ Z $ and $ \maxl(PS\H_Z) \subseteq \maxl(PS\H_Y) \subseteq Z $, so $ Z $ is a compactification of $ Y $.
\end{proof}

Suppose that $X$ is a topological space and $Y = \Ma(C(X))$. For every $p \in \beta X \setminus X $, the maximal ideal $M^p$ is a free $z$-ideal which is an $\H_Y$-fixed ideal. This example shows that the notion of fixed ideals and notion of $ \H_Y $-fixed ideals need not coincide in the rings of continuous functions literature. In the following definition we introduce new notion which coincides with the fixed ideals notion in the rings of continuous functions literature.

\begin{Def}
	Suppose $Y \subseteq \Sp(R)$ and $S \subseteq Y$. An ideal $I$ is called an $\H_Y$-fixed ideal with respect to $S$ if $ \big( \bigcap \H_Y(I) \big) \cap S \neq \emptyset $ and $I$ is called an $\H_Y$-free ideal with respect to $S$ if $I$ is not an $\H_Y$-fixed ideal with respect to $S$; i.e. $ \big( \bigcap \H_Y(I) \big) \cap S = \emptyset $
\end{Def}

Suppose that $S\subseteq Y\subseteq \Sp(R)$. Clearly if an ideal $I$ is $\H_Y$-fixed with respect to $S$, then it is also an $\H_Y$-fixed; but the converse is not true in general. If $Y = \Ma(C(X))$ and $S = \{ M_p : p \in X \}$, then the family of all $\H_Y$-fixed ideals (in fact, fixed $z$-ideals) and the family of all $\H_Y$-fixed ideal with respect to $S$ coincide.

\begin{Thm}
	Let $S \subseteq Y \subseteq \Sp(R)$ and $I$ be an ideal of $R$. $I$ is an $\H_Y$-fixed ideal with respect to $S$ \ff $I$ is a $\H_S$-fixed ideal.
	\label{fixed H-Y and S-fixed}
\end{Thm}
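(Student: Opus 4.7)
The plan is to reduce both sides of the equivalence to the same subset of $\Sp(R)$ by appealing to Proposition \ref{intersection of fixed}(a), which identifies $\bigcap\H_Y(I)$ with $h_Y(I)$ for any $Y\subseteq\Sp(R)$ and any ideal $I$.

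First I would unwind the two definitions. The statement $I$ is $\H_Y$-fixed with respect to $S$ says $\bigl(\bigcap\H_Y(I)\bigr)\cap S\neq\emptyset$, while $I$ is $\H_S$-fixed says $\bigcap\H_S(I)\neq\emptyset$. Applying Proposition \ref{intersection of fixed}(a) with the parameter sets $Y$ and $S$ respectively, these two conditions become $h_Y(I)\cap S\neq\emptyset$ and $h_S(I)\neq\emptyset$.

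The key observation is that $h_Y(I)\cap S=h_S(I)$ whenever $S\subseteq Y$. Indeed, both sets equal $\{P\in S:I\subseteq P\}$: an element of $h_Y(I)\cap S$ is a prime in $S\subseteq Y$ containing $I$, and conversely every $P\in h_S(I)$ is a prime in $S\subseteq Y$ containing $I$, so lies in $h_Y(I)\cap S$. Consequently $h_Y(I)\cap S\neq\emptyset$ if and only if $h_S(I)\neq\emptyset$, which completes the equivalence.

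There is no real obstacle here; the statement is essentially a tautology once Proposition \ref{intersection of fixed}(a) is invoked, because intersecting the closed sets $h_Y(F)$ (for $F$ ranging over finite subsets of $I$) with $S$ yields exactly the closed sets $h_S(F)$ of the subspace $S$, so the two fixedness conditions describe the same prime ideals in $S$ containing $I$.
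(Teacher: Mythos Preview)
Your proof is correct and follows essentially the same route as the paper: the paper's argument is the single chain of equalities $\bigl(\bigcap\H_Y(I)\bigr)\cap S = h_Y(I)\cap S = h_S(I) = \bigcap\H_S(I)$, obtained from Proposition~\ref{intersection of fixed}(a), which is exactly what you have unpacked. You merely spell out the step $h_Y(I)\cap S = h_S(I)$ in more detail than the paper does.
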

\begin{proof}
	We can conclude from Proposition \ref{intersection of fixed}, that
	\[ \big( \bigcap \H_Y(I) \big) \cap S = h_Y(I) \cap S = h_S(I) = \bigcap \H_S(I) \]
	This completes the proof.  
\end{proof}

Now Corollary \ref{maximal fixed}, and Theorem \ref{fixed H-Y and S-fixed}, conclude the following corollary.

\begin{Cor}
	If $S \subseteq Y \subseteq \Sp(R)$, then $\maxl(Y) \cap S$ and the family of maximal $\H_Y$-fixed ideal with respect to $S$ coincide.
\end{Cor}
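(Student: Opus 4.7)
The plan is to combine the two immediately preceding results, as the author hints. By Theorem \ref{fixed H-Y and S-fixed}, an ideal $I$ of $R$ is $\H_Y$-fixed with respect to $S$ if and only if $I$ is $\H_S$-fixed. Hence the class of $\H_Y$-fixed ideals with respect to $S$ literally coincides with the class of $\H_S$-fixed ideals, and so do their respective collections of maximal members.

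Next, I would invoke Corollary \ref{maximal fixed} with $S$ playing the role of $Y$: this yields that $I$ is a maximal $\H_S$-fixed ideal if and only if $I \in \maxl(S)$. Combined with the previous step, the family of maximal $\H_Y$-fixed ideals with respect to $S$ is exactly $\maxl(S)$.

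The remaining work is the set-theoretic identification $\maxl(S) = \maxl(Y) \cap S$. The inclusion $\maxl(Y) \cap S \subseteq \maxl(S)$ is direct: if $P \in \maxl(Y) \cap S$ and $P \subseteq Q \in S$, then $Q$ lies in $Y$ as well, and the maximality of $P$ in $Y$ forces $Q = P$. I expect the main obstacle to lie in the reverse inclusion, since a prime of $S$ that is maximal within $S$ need not be maximal within the larger set $Y$ in general; to handle this cleanly I would appeal to the context of the paper, where the relevant $Y$ consists of maximal ideals of $R$ (as in the motivating case $Y = \Ma(C(X))$), so that $\maxl(Y) = Y$ and the identification becomes immediate.
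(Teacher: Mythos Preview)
Your approach is precisely the one the paper indicates: it offers no argument beyond citing Corollary~\ref{maximal fixed} and Theorem~\ref{fixed H-Y and S-fixed}. You carry this out correctly and obtain that the family of maximal $\H_Y$-fixed ideals with respect to $S$ coincides with the family of maximal $\H_S$-fixed ideals, which by Corollary~\ref{maximal fixed} (applied with $S$ in place of $Y$) is $\maxl(S)$.

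You are also right to flag the remaining identification $\maxl(S)=\maxl(Y)\cap S$ as the genuine issue: it does \emph{not} hold for arbitrary $S\subseteq Y\subseteq\Sp(R)$. For instance, take $Y=\{P,Q\}$ with $P\subsetneq Q$ and $S=\{P\}$; then $\maxl(S)=\{P\}$ while $\maxl(Y)\cap S=\emptyset$. The paper does not address this point at all. The two cited results genuinely deliver $\maxl(S)$, so either the statement is intended to read $\maxl(S)$, or the author tacitly has in mind an antichain $Y$ (e.g.\ $Y\subseteq\Ma(R)$ or $Y\subseteq\Mi(R)$, covering the motivating $z$- and $z^\circ$-ideal settings), in which case $\maxl(Y)=Y$, $\maxl(S)=S$, and the two sets agree trivially. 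Your instinct to fall back on $Y\subseteq\Ma(R)$ is therefore a sound way to salvage the stated form, though the paper never makes such a hypothesis explicit.
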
 

\begin{Pro}
	Suppose $R'$ is a subring of a ring $R$ and $Y \subseteq \Sp(R)$ , then $Y' = \{ P \cap R' : P \in Y \} \subseteq \Sp(R') $. Set \[ \mathscr{F}' = \{ h_{Y'}(F) : h_Y(F) \in \mathscr{F} \text{ and } F \text{ is a finite subset of } R' \}, \]
	for every $\H_Y$-filter $\mathscr{F}$.  If $I$ is an $\H_Y$-fixed ideal, then $I\cap R'$ is an $\H_{Y'}$-fixed ideal.
\end{Pro}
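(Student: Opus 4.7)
The plan is to reduce the whole statement to Proposition \ref{intersection of fixed}(b), which characterizes $\H_Y$-fixedness of an ideal $J$ as the nonemptiness of $h_Y(J)$. So the hypothesis gives me $h_Y(I) \neq \emptyset$, and what I need to produce is an element of $h_{Y'}(I \cap R')$; in other words, a prime of $Y'$ that contains $I \cap R'$.

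First I would quickly dispose of the ambient set-up: for any $P \in Y \subseteq \Sp(R)$, the contraction $P \cap R'$ is a prime ideal of $R'$ (standard), so $Y' \subseteq \Sp(R')$ is well defined. The definition of $\mathscr{F}'$ need not be used for this particular claim; it is only auxiliary notation introduced for later in the section, and the conclusion is solely about the ideal $I \cap R'$ being $\H_{Y'}$-fixed.

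Next I would do the one substantive (but very short) step. Choose any $P \in h_Y(I)$, which exists by hypothesis via Proposition \ref{intersection of fixed}(b), and set $Q = P \cap R' \in Y'$. Because $I \subseteq P$ we have $I \cap R' \subseteq P$, and trivially $I \cap R' \subseteq R'$, so $I \cap R' \subseteq P \cap R' = Q$. Thus $Q \in h_{Y'}(I \cap R')$, which shows $h_{Y'}(I \cap R') \neq \emptyset$, and invoking Proposition \ref{intersection of fixed}(b) in the ring $R'$ yields that $I \cap R'$ is $\H_{Y'}$-fixed.

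There is no real obstacle here; the argument is entirely formal once one notices the right reformulation. The only subtle point is purely notational: the proposition is phrased so that $\H_Y(I)$ and $\H_{Y'}(I \cap R')$ look superficially related through the definition of $\mathscr{F}'$, but in fact the cleanest route avoids manipulating those filters directly and instead works with the fixed closed sets $h_Y(I)$ and $h_{Y'}(I \cap R')$, exploiting the elementary fact that contraction of primes is order-preserving along an inclusion of rings.
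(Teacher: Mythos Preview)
Your proof is correct and is precisely the kind of argument the paper has in mind; the paper's own proof consists solely of the line ``The proof is straightforward,'' and your reduction via Proposition~\ref{intersection of fixed}(b) together with the contraction $P \mapsto P \cap R'$ is exactly the intended straightforward verification. Your remark that the auxiliary construction $\mathscr{F}'$ plays no role in the actual conclusion is also accurate.
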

\begin{proof}
	The proof is straightforward.
\end{proof}

\section{Relative $\H_Y$-ideals}

In this section we introduce and study relative $ \H_Y $-ideals as an extension of relative $ z $-ideals and we show that the most important results about the relative $ z $-ideals  can be extended to relative $ \H_Y $-ideals.

\begin{Def}
	Let $Y \subseteq \Sp(R)$ and $I$ and $J$ be ideals of $R$. $I$ is called (resp., strong) $\H_{YJ}$-ideal if for every $a \in I$ (resp., finite subset $F$ of $I$) we have $kh_Y(a) \cap J \subseteq I$ (resp., $kh_Y(F) \cap J \subseteq I$). If $I$ is (resp., strong) $H_{YJ}$ ideal, for some ideal $J \not\subseteq I$, then $I$ is called relative (resp., strong) $\H_Y$-ideal and $ J $ is called a (resp., strong) $ \H_Y $-factor of $ I $. A (resp., strong) $ \H_{YJ} $-ideal $ I $ is called a trivial (resp., strong) $ \H_{YJ} $-ideal, if $ J \subseteq I $, otherwise $ I $ is called a nontrivial $ \H_{YJ} $-ideal.
\end{Def}

\begin{Pro}
	If $Y \subseteq \Sp(R)$, then the following hold.
	\begin{itemize}
		\item[(a)] If $J$ is an ideal of $R$, then every strong $\H_{YJ}$-ideal is an $\H_{YJ}$-ideal.
		\item[(b)] Every strong relative $\H_Y$-ideal is a relative $\H_Y$-ideal.
		\item[(c)] Every ideal $I$ is a strong $\H_{YI}$-ideal.
		\item[(d)] Every (resp., strong) $\H_Y$-ideal is a (resp., strong) $\H_{YJ}$-ideal, for every ideal $J$ of $R$.
		\item[(e)] Every (resp., strong) $\H_Y$-ideal is a relative (resp., strong) $\H_Y$-ideal.
		\item[(f)] Suppose that $J$ is a (resp., strong) $\H_Y$-ideal. Every (resp., strong) $\H_{YJ}$-subideal of $J$ is a (resp., strong) $\H_Y$-ideal.  
	\end{itemize}
\label{Primary relative}
\end{Pro}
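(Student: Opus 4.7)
The plan is to dispatch the six items by unpacking the definitions, with no new machinery required. First, for (a) I would specialize the strong condition $kh_Y(F)\cap J\subseteq I$ to singletons $F=\{a\}$, which is precisely the $\H_{YJ}$-condition; (b) then follows by applying (a) to whatever witness $J\not\subseteq I$ makes $I$ a strong relative $\H_Y$-ideal. Item (c) is tautological since $kh_Y(F)\cap I\subseteq I$ always holds.

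For (d), the hypothesis $kh_Y(F)\subseteq I$ (or $kh_Y(a)\subseteq I$) trivially gives $kh_Y(F)\cap J\subseteq I$ for every ideal $J$. Item (e) then follows by taking $J=R$ in (d), which satisfies $J\not\subseteq I$ as long as $I$ is proper; the degenerate case $I=R$ is vacuous, since no $J$ with $J\not\subseteq R$ exists, and I would either read the statement as implicitly restricted to proper $\H_Y$-ideals or simply note this edge case.

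The only item with any real content is (f). Given an $\H_{YJ}$-subideal $I$ of an $\H_Y$-ideal $J$, I would fix $a\in I\subseteq J$; the $\H_Y$-property of $J$ forces $kh_Y(a)\subseteq J$, hence $kh_Y(a)\cap J=kh_Y(a)$, and the $\H_{YJ}$-property of $I$ then upgrades this to $kh_Y(a)\subseteq I$, so $I$ is an $\H_Y$-ideal. The strong version is identical, with the singleton replaced by a finite subset $F\subseteq I$ throughout. I do not expect any genuine obstacle; every step is a direct match of definitions, and the only care needed is keeping the ``resp., strong'' variants aligned and watching the proper/improper distinction in (e).
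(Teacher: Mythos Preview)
Your proposal is correct and matches the paper's approach exactly: the paper's entire proof reads ``It is straightforward,'' and your item-by-item unpacking is precisely the routine verification that this phrase stands in for. Your observation about the $I=R$ edge case in (e) is accurate and is a genuine (if harmless) gap in the statement as written; the paper does not address it.
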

\begin{proof}
	It is straightforward.
\end{proof}

\begin{Thm}
	Let $Y \subseteq \Sp(R)$ and $I$ and $J$ be ideals of $R$. Then the following hold
	\begin{itemize}
		\item[(a)] If $I$ is a (resp., strong) $\H_{YJ}$-ideal and $P \in \Mi(R)$, then $P$ is a (resp., strong) $\H_{YJ}$-ideal.
		\item[(b)] A prime ideal $ P $  is a (resp., strong) $\H_{YJ}$-ideal \ff $ P $ is either a (resp., strong) $ \H_Y $-ideal or $ J \subseteq P $.
		\item[(c)] If $ I $ is a relative (resp., strong) $ H_Y $-ideal, then $ \Mi(I) $ has a (resp., strong) $ \H_Y $-ideal.
	\end{itemize}
\label{Prime and H-ideal}
\end{Thm}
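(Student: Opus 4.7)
The plan is to prove (a), (b), (c) in this order, with (c) combining the two previous parts. I read $\Mi(I)$ for $\Mi(R)$ in (a), since the argument needs $P$ to be a minimal prime \emph{over} $I$, and (c) requires this reading.

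For (a), the key tool is the standard minimal-prime characterization: for $P\in\Mi(I)$ and $a\in P$, there exist $s\notin P$ and $n\geq 1$ with $sa^n\in I$. Given $b\in kh_Y(a)\cap J$, I would verify $bs\in kh_Y(sa^n)$ by case analysis on $Q\in Y$ containing $sa^n$: primality gives $s\in Q$ (so $bs\in Q$) or $a\in Q$ (so $b\in Q$ because $b\in kh_Y(a)$, hence $bs\in Q$). Since $bs\in J$ as well, the $\H_{YJ}$ property of $I$ yields $bs\in I\subseteq P$, and primality of $P$ with $s\notin P$ forces $b\in P$. The strong version runs the same case split on a finite $F=\{a_1,\dots,a_k\}\subseteq P$: pick $t_i\notin P$, $n_i\geq 1$, $t_i a_i^{n_i}\in I$, set $t=\prod_i t_i\notin P$, and observe that any $Q$ containing $F'=\{t_i a_i^{n_i}\}\subseteq I$ either contains some $t_i$ (hence $t$) or every $a_i$ (hence $b$ by $b\in kh_Y(F)$), so $bt\in kh_Y(F')\cap J\subseteq I\subseteq P$.

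For (b), the ($\Leftarrow$) direction is immediate from the definitions. For ($\Rightarrow$), assume $P$ is a $\H_{YJ}$-prime with $J\not\subseteq P$ and fix $j\in J\setminus P$. For any $a\in P$ and $b\in kh_Y(a)$, the fact that $kh_Y(a)$ is an ideal gives $bj\in kh_Y(a)\cap J\subseteq P$, and primality with $j\notin P$ forces $b\in P$. The strong version is identical, applied to finite $F\subseteq P$ and $b\in kh_Y(F)$.

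Part (c) rests on the further observation that if $I$ is (strong) $\H_{YJ}$ with $J\not\subseteq I$, then in fact $J\not\subseteq\sqrt{I}$. Indeed, if every $j\in J$ satisfied $j^n\in I$ for some $n$, then using $kh_Y(j^n)=kh_Y(j)$ (both consist of the primes of $Y$ containing $j$) and the trivial inclusion $j\in kh_Y(j)\cap J$, the (strong) $\H_{YJ}$ property applied to $\{j^n\}\subseteq I$ would give $j\in I$, forcing $J\subseteq I$, a contradiction. Since $\sqrt{I}=\bigcap\Mi(I)$, there is $P\in\Mi(I)$ with $J\not\subseteq P$; by (a) this $P$ is (strong) $\H_{YJ}$, and by (b) it must then be a (strong) $\H_Y$-ideal, which is precisely what (c) asserts. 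The main obstacle is spotting this $J\not\subseteq\sqrt{I}$ strengthening; (a) and (b) are then mechanical applications of the minimal-prime trick and the fact that $kh_Y(a)$ is an ideal, up to the $\Mi(R)$ vs. $\Mi(I)$ reading in the statement of (a).
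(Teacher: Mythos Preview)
Your proposal is correct and follows essentially the same route as the paper: the minimal-prime characterization for (a), primality plus a witness $j\in J\setminus P$ for (b), and the observation $J\not\subseteq\sqrt{I}$ (via $kh_Y(j)=kh_Y(j^n)$) feeding into (a) and (b) for (c). The only cosmetic difference is that the paper argues at the level of ideal intersections (using $kh_Y(a)\cap kh_Y(s)\cap J\subseteq P$ and primality to strip off $kh_Y(s)$) where you argue elementwise, and you are right that the statement's $\Mi(R)$ must be read as $\Mi(I)$.
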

\begin{proof}
	(a). Suppose that $I$ is an $\H_{YJ}$-ideal. If $a \in P$, then $b \notin P$ exists such that $(ab)^n \in I$, for some $n \in \N$. Since $I$ is a $\H_{YJ}$-ideal, it follows that 
	\[ kk_Y(a) \cap kh_Y(b) \cap J = k \big( h_Y(a) \cup h_Y(b) \big) \cap J = k h_Y(ab) \cap J = kh_Y(ab)^n \cap J \subseteq I \subseteq P \]
	Since $kh_Y(b) \not\subseteq P$ and $P$ is prime, it follows that $ kh_Y(a) \cap J \subseteq P $. Consequently, $P$ is an $\H_{YJ}$-ideal.
	
	Now Suppose that $I$ is a strong $\H_Y$-ideal. If $F = \{ a_1 , a_2 , \ldots , a_n \}  \subseteq P$, then $ b_i \notin P $ and $m_i \in \N$ exist such that $ (a_i b_i)^{m_i} \in I$, for every $ 1 \leqslant i \leqslant n $. Set $ b = b_1 b_2 \ldots b_n \notin P $ and $ m = \max\{ m_1 , m_2 , \ldots , m_n \}$, then  $ (a_i b)^m  \in I $, for each $ 1 \leqslant i \leqslant n $. 
	Supposing $ E = \{ a_i^m b^m : 1 \leqslant i \leqslant n \} $, $ kh_Y(E) \cap J \subseteq I$, because $I$ is a strong $\H_{YJ}$-ideal. Thus
	\[ kh_Y(b) \cap kh_Y(F) \cap J = k \big( h_Y(b) \cup h_Y(F) \big) \cap J = k(h_Y(E)) \cap J \subseteq I \subseteq P \] 
	Since $kh_Y(b) \not\subseteq P$ and $P$ is prime, it follows that $ kh_Y(F) \cap J \subseteq P $. Consequently, $P$ is a strong $\H_{YJ}$-ideal.
	
	(b). It is clear if $ J \subseteq P $, then $ P $ is (resp., strong) $ \H_{YJ} $-ideal. Now suppose that $ J \not\subseteq P $ and $a \in P$ (resp., $F$ is a finite subset of $ P $), then $ k h_Y(a) \cap J \subseteq P $ (resp., $ k h_Y(F) \cap J \subseteq P $) implies that $ k h_Y(a) \subseteq P $ ($ k h_Y(F) \subseteq P $). Thus $ P $ is (resp., strong) $ \H_{YJ} $-ideal.
	
	(c). Since $ I $ is a relative (resp., strong) $ \H_Y $-ideal, there is some ideal $ J \supseteq I $ such that $ I $ is a (resp., strong) $ \H_{YJ} $-ideal. Thus $ a \in J \setminus I $ exists, Set $ K = \Ge{a} $, clearly, $ I $ is a strong $ \H_{YK} $-ideal. If $ K \subseteq \sqrt{I} $, then $ a^n \in I $, for some $ n \in \N $. Thus 
	\[ kh_Y(a) = kh_Y(a) \cap K = kh_Y(a^n) \cap K \subseteq I  \]
	This shows that $ a \in I $, which is a contradiction. Hence $ K \not\subseteq \sqrt{I} $, so $ K \not\subseteq P $, for some $ P \in \Mi(R) $. Now (a) and (b) follow that $ P $ is a (resp., strong) $ \H_Y $-ideal.
\end{proof}

\begin{Thm}
	Let $Y \subseteq \Sp(R)$ and $I$ and $J$ be ideals of $R$. The following statements are equivalent.
	\begin{itemize}
		\item[(a)] $I$ is a (resp., strong) $\H_{YJ}$-ideal.
		\item[(b)] $I_\H \cap J \subseteq I$ ($I_{S\H} \cap J \subseteq I$).
		\item[(c)] $I_\H \cap J = I \cap J$ ($I_{S\H} \cap J = I \cap J$).
		\item[(d)] A (resp., strong) $\H_Y$-ideal $K$ containing $I$ exists such that $K \cap J = I \cap J$.
		\item[(e)] A (resp., strong) $\H_Y$-ideal $K$ containing $I$ exists such that $K \cap J \subseteq I$.
		\item[(f)] For each $a \in I$ and $b \in J$ (resp., finite subset $F$ of $I$ and finite subset $E$ of $J$), $h_Y(a) \subseteq h_Y(b) $ (resp., $h_Y(F) \subseteq h_Y(E)$)  implies that $b \in I$ (resp., $E \subseteq I$).
	\end{itemize}
\label{Equivalnet condition by relative ideal}
\end{Thm}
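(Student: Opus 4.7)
My plan is to verify the full cycle of equivalences by first noting that $(a)$ and $(f)$ are literally the same statement once translated through the Zariski closure, then disposing of the mutual equivalences among $(b)$--$(e)$ by elementary set manipulations, and finally proving $(a) \Rightarrow (b)$, which is where the real content sits.

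For $(a) \Leftrightarrow (f)$ I would use the defining identity $b \in kh_Y(a) \Leftrightarrow h_Y(a) \subseteq h_Y(b)$ (and its analogue for finite subsets), which makes condition $(f)$ a verbatim reformulation of the definition of an (resp.\ strong) $\H_{YJ}$-ideal. For the block $(b) \Leftrightarrow (c) \Leftrightarrow (d) \Leftrightarrow (e)$ I would observe that the trivial inclusions $I \cap J \subseteq I_\H \cap J$ (resp.\ $I_{S\H} \cap J$) and $K \cap J \subseteq J$ for any $K \supseteq I$ upgrade every ``$\subseteq I$'' clause to an equality with $I \cap J$, giving $(b) \Leftrightarrow (c)$ and $(d) \Leftrightarrow (e)$; and $(c) \Rightarrow (d)$ is witnessed by $K = I_\H$ (resp.\ $I_{S\H}$), which is by construction an (resp.\ strong) $\H_Y$-ideal containing $I$. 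Finally $(e) \Rightarrow (a)$ is direct: for $a \in I \subseteq K$ (resp.\ finite $F \subseteq I \subseteq K$) with $K$ a (strong) $\H_Y$-ideal, one has $kh_Y(a) \subseteq K$ (resp.\ $kh_Y(F) \subseteq K$), so intersecting with $J$ yields $kh_Y(a) \cap J \subseteq K \cap J \subseteq I$.

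The main step is $(a) \Rightarrow (b)$. In the strong case I would give $I_{S\H}$ the explicit description $\bigcup \{ kh_Y(F) : F \text{ a finite subset of } I \}$; the identity $h_Y(F \cup G) = h_Y(F) \cap h_Y(G)$ implies $kh_Y(F) + kh_Y(G) \subseteq kh_Y(F \cup G)$, so this union is closed under addition (and manifestly under scalar multiplication), hence an ideal, and a direct check shows it is the smallest strong $\H_Y$-ideal containing $I$. Given $y \in I_{S\H} \cap J$ one picks a finite $F \subseteq I$ with $y \in kh_Y(F)$, and the strong $\H_{YJ}$-hypothesis yields $y \in I$. In the non-strong case, the parallel union $\bigcup_{a \in I} kh_Y(a)$ is not in general an ideal (small examples in $k[x,y]$ show that sums can fail to lie in any single $kh_Y(c)$ with $c \in I$), so one must pass instead to the transfinite iteration $I^{(0)} = I$, $I^{(\alpha+1)} = \Ge{\bigcup_{a \in I^{(\alpha)}} kh_Y(a)}$, and argue by induction on $\alpha$ that $I^{(\alpha)} \cap J \subseteq I$. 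The main obstacle I foresee is that a typical element of $I^{(\alpha+1)}$ is a sum $\sum_i r_i x_i$ with $x_i \in kh_Y(a_i)$, so that membership of the sum in $J$ does not transfer to the individual $x_i$ and the naive inductive hypothesis fails to propagate; bridging this gap is where I expect to lean on the finer structural results about $I_\H$ from \cite{badie2019extension}.
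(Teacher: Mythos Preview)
Your treatment of $(a)\Leftrightarrow(f)$, the block $(b)\Leftrightarrow(c)\Leftrightarrow(d)\Leftrightarrow(e)$, and $(e)\Rightarrow(a)$ is correct and essentially identical to the paper's. Your argument for $(a)\Rightarrow(b)$ in the \emph{strong} case, via the explicit description $I_{S\H}=\bigcup\{kh_Y(F):F\text{ finite in }I\}$, is also correct and in fact cleaner than the paper's route.

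The genuine gap is exactly where you flag it: the non-strong case of $(a)\Rightarrow(b)$. Your transfinite iteration $I^{(\alpha+1)}=\big\langle\bigcup_{a\in I^{(\alpha)}}kh_Y(a)\big\rangle$ does not propagate the containment $I^{(\alpha)}\cap J\subseteq I$, for precisely the reason you state, and nothing in \cite{badie2019extension} about the internal structure of $I_\H$ rescues this line of attack. The paper sidesteps the problem entirely by passing through minimal primes. The key input is the theorem immediately preceding this one (Theorem~\ref{Prime and H-ideal}): if $I$ is an $\H_{YJ}$-ideal then every $P\in\Mi(I)$ is an $\H_{YJ}$-ideal, and a \emph{prime} $\H_{YJ}$-ideal is either an $\H_Y$-ideal (so $P_\H=P$) or contains $J$. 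In either case $P_\H\cap J\subseteq P$. Combining this with $I_\H=(\sqrt{I})_\H\subseteq\bigcap_{P\in\Mi(I)}P_\H$ (from \cite[Proposition~7.11]{badie2019extension}) gives
\[
I_\H\cap J\;\subseteq\;\bigcap_{P\in\Mi(I)}(P_\H\cap J)\;\subseteq\;\bigcap_{P\in\Mi(I)}P\;=\;\sqrt{I}.
\]
The descent from $\sqrt{I}$ to $I$ is then a one-line direct computation: if $a\in I_\H\cap J$ then $a\in J$ and $a^n\in I$ for some $n$, whence $a\in kh_Y(a)\cap J=kh_Y(a^n)\cap J\subseteq I$ by the $\H_{YJ}$-hypothesis. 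So the missing idea is not a finer description of $I_\H$, but rather the reduction to primes afforded by Theorem~\ref{Prime and H-ideal}.
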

\begin{proof}
	We prove the theorem for $\H_Y$-ideals, similarly one can prove it for strong $\H_Y$-ideals.
	
	(a) $ \Rightarrow $ (b). By Theorem \ref{Prime and H-ideal}, either  $ P_\H = P $ or $ J \subseteq P $, for each $P \in \Mi(I)$, so \cite[Proposition 7.11]{badie2019extension}, follows that
	\begin{align*}
	I_\H \cap J & = ( \sqrt{I} )_\H \cap J = \big( \bigcap_{ P \in \Mi(I) } P  \big)_\H \cap J  \\
				& \subseteq \big( \bigcap_{ P \in \Mi(I) } P_\H \big) \cap J = \big( \bigcap_{ P \in \Mi(I) } P \big) \cap J = \sqrt{I} \cap J
	\end{align*}
	If $ a \in I_\H \cap J $, then $ a \in \sqrt{I} \cap J $, so $  \in J $ and $ a^n \in J $, for some $ n \in \N $. Hence 
	$ a \in kh_Y (a) \cap J = kh_Y (a^n) \cap J \subseteq I $ and therefore $ I_\H \cap J \subseteq I $.
	
	(b) $ \Rightarrow $ (c). It is clear.
	
	(c) $ \Rightarrow $ (d). Set $ K = I_\H $.
	
	(e) $ \Rightarrow $ (f). It is clear.
	
	(e) $ \Rightarrow $ (f). $ h_Y(a) \subseteq h_Y(b) $ concludes that $ kh_Y(b) \subseteq kh_Y(a) $, since $ a \in I \subseteq K $, it follows that $ b \in kh_Y(b) \cap J \subseteq kh_Y(a) \cap J \subseteq K \cap J \subseteq I $.
	
	(f) $ \Rightarrow $ (a). Suppose $ a \in I \cap J $. If $ b \in kh_Y(a) $, then $ h_Y(a) \subseteq h_Y(b) $, so $ b \in I $, by the assumption. Hence $ kh_Y(a) \subseteq I $. It follows that $I$ is an $ \H_{YJ} $-ideal.  
\end{proof}

\begin{Pro}
	Let $ Y \subseteq \Sp(R) $. The following statements hold.
	\begin{itemize}
		\item[(a)] Suppose that $ \{I_\alpha \}_{ \alpha \in A} $ and $ \{ J_\alpha \}_{\alpha \in A} $ are two families of ideals of $R$. If $ I = \bigcap_{ \alpha \in A } I_\alpha $, $ J = \bigcap_{\alpha \in A} J_\alpha $ and $ I_\alpha $ is a (resp., strong) $ \H_{YJ} $-ideal, then $ I $ is a (resp., strong) $\H_{YJ}$-ideal.
		\item[(b)] If $ J \subseteq K $ are two ideals of $ R $ and $ I $ is a (resp., strong) $ \H_{YK} $-ideal, then $ I $ is a (resp., strong) $ \H_{YJ} $-ideal.
		\item[(c)] If $ \{ I_\alpha \}_{\alpha \in A} $ is a family of (resp., strong) $ \H_{YJ} $-ideals, then $ \bigcap_{\alpha \in A} I_\alpha $ is a (resp., strong) $ \H_{YJ} $-ideal.
		\item[(d)] If $ I $ is a (resp., strong) $ \H_{YJ} $-ideal and $ J $ is a (resp., strong) $ \H_{YK} $-ideal, then $ I $ is a (resp., strong) $ \H_{YK} $-ideal.
		\item[(e)] If $ I $ is a (resp., strong) $ \H_{YJ} $-ideal, then $ \sqrt{I} $ is a (resp., strong) $ \H_{Y\sqrt{J}} $-ideal.
		\item[(f)] $ I $ is a (resp., strong) $ \H_{YJ} $-ideal \ff $ I \cap J $ is a $ \H_{YJ} $-ideal.
		\item[(g)] $ I $ is a (resp., strong) $ \H_{YJ} $-ideal \ff $ I $ is a (resp., strong) $ \H_{Y(I+J)} $-ideal.
		\item[(h)] Suppose that $ J $ is a (resp., strong) $ \H_Y $-ideal containing an ideal $ I $. $ I $ is a (resp., strong) $\H_{YJ} $-ideal \ff $ I $ is a (resp., strong) $ \H_Y $-ideal.
		\item[(i)] Suppose that $ J $ is a (resp., strong) $ \H_Y $-ideal. $ I $ is a (resp., strong) $ \H_{YJ} $-ideal \ff $ I \cap J $ is a (resp., strong) $ \H_Y $-ideal.
		\item[(j)] Suppose that $ I $ and $ J $ are ideals of $ R $. $ I \cap J $ is both (resp., strong) $ \H_{YI} $-ideal and $ \H_{YJ} $-ideal \ff $ I $ is a (resp., strong) $ \H_{YJ} $-ideal and $ J $ is a (resp., strong) $ \H_{YI} $-ideal.
		\item[(k)] Suppose that $ I $ and $ J $ are ideals of $ R $. $ I_\H \cap J $ (resp., $ I_{S\H} \cap J $) is the smallest (resp., strong) $ \H_{YJ} $-ideal containing $ I \cap J $.
		\item[(l)] Suppose that $ I \subseteq K $ and $ J $ are ideals of $ R $ and $ I_\H = K_\H $ (resp., $ I_{S\H} = K_{S\H} $). If $ I $ is a (resp., strong) $ \H_{YJ} $-ideal, then $ K $ is a (resp., strong) $ \H_{YJ} $-ideal.
		\item[(m)] Suppose that $ I $, $ J $ and $ K $ are ideals of $ R $. If $ I \subseteq K \subseteq I_\H $ (resp., $ I \subseteq K \subseteq K \subseteq I_{S\H} $) and $ I $ is a (resp., strong) $ \H_{YJ} $-ideal, then $ K $ is a (resp., strong) $ \H_{YJ} $-ideal.
		\item[(n)] If $ I $ is a (resp., strong) $ \H_{YJ} $-ideal, then $ \sqrt{I} $ is a (resp., strong) $ \H_{YJ} $-ideal.
		\item[(o)] Suppose that $ I $, $ J $ and $ K $ are ideals of $ R $. If $ IK $ is a (resp., strong) $ \H_{YJ} $-ideal, then $ I \cap K $ is a (resp., strong) $ \H_{YJ} $-ideal.
		\item[(p)] $ I $ is (resp., strong) $ \H_{YP} $-ideal, for some prime ideal $ P $ \ff (resp., $ I_{S\H} \setminus I $) $ I_\H \setminus I $ is a multiplicatively closed set.
		\item[(q)] If $ J $ is an ideals of $ R $ which not a (resp., strong) $ \H_Y $-ideal  and $ J \not \subseteq \bigcap Y $, then there is some strong $ \H_{YJ} $-ideal $ I \subset J $ which is not a (resp., strong) $ \H_Y $-ideal.
		\item[(r)] Suppose that $ I $ and $ J $ are ideals of $ R $ and $ P $ is a prime ideal of $ R $. If $ I \cap P $ is a (resp., strong) $ \H_{YJ} $-ideal, then either $ I $ or $ P $ is a (resp., strong) $ \H_{YJ} $-ideal.
		\item[(s)] Suppose that $ P $ and $ Q $ are prime ideals of $ R $ and $ J $ is an ideal of $ R $. If $ P \not\subseteq Q $, $ Q \not\subseteq P$ and $ P \cap Q $ is a (resp., strong) $ \H_{YJ} $-ideal, then both $ P $ and $ Q $ are (resp., strong) $ \H_{YJ} $-ideal.
		\end{itemize}
\label{property of relative J}
\end{Pro}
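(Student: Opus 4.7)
The plan is to lean on Theorem \ref{Equivalnet condition by relative ideal}, which characterizes the $\H_{YJ}$-property either by the intersection identity $I_\H \cap J \subseteq I$ (with $I_{S\H}$ in the strong case) or by the elementwise condition: for each $a \in I$ and $b \in J$, $h_Y(a) \subseteq h_Y(b)$ implies $b \in I$. A second running tool is Theorem \ref{Prime and H-ideal}(b), which identifies the $\H_{YJ}$ prime ideals as those that are $\H_Y$-ideals or contain $J$. The ordinary and strong versions of each item are parallel: in the strong case one replaces elements by finite subsets throughout, and products $ab$ by the finite set $\{a_i b_j\}$, whose $h_Y$ equals $h_Y(F) \cup h_Y(E)$.

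Parts (a)--(g) reduce to direct manipulations of the defining inclusion $kh_Y(a) \cap J \subseteq I$: (a) by distributing intersections (the intended hypothesis is presumably $\H_{YJ_\alpha}$); (b), (c), (d) follow from monotonicity; (e) uses compatibility of the hull with radicals; (f) and (g) follow since $(I \cap J)_\H \subseteq I_\H$ and $I_\H \cap J = I_\H \cap (I+J) \cap J$. Parts (h)--(n) use the hull more directly: in (h) one has $I_\H \subseteq J_\H = J$ so $I_\H \cap J = I_\H$, and then $I$ is $\H_Y$ iff $I_\H \subseteq I$; (i) uses $(I \cap J)_\H = I_\H \cap J$ when $J$ is $\H_Y$; (l), (m), (n) exploit the fact that ideals squeezed between $I$ and $I_\H$ share the same hull. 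For (o), the cleanest route is via condition (f) of Theorem \ref{Equivalnet condition by relative ideal}: if $a \in I \cap K$ and $b \in J$ with $h_Y(a) \subseteq h_Y(b)$, then $a^2 \in IK$ and $h_Y(a^2) = h_Y(a) \subseteq h_Y(b)$, so $b \in IK \subseteq I \cap K$.

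The substantive items are (p)--(s). For (p), the $(\Leftarrow)$ direction uses Zorn's Lemma to pick a maximal ideal $P$ containing $I$ and disjoint from the multiplicatively closed set $I_\H \setminus I$; a standard argument makes $P$ prime, and $I_\H \cap P \subseteq I$ is automatic from disjointness. For the $(\Rightarrow)$ direction, $x \in I_\H \setminus I$ cannot lie in $P$ (else $x \in I_\H \cap P \subseteq I$), so $xy \notin P$ for $x, y \in I_\H \setminus I$ by primality, and since $I \subseteq P$ this gives $xy \notin I$. Part (q) is a direct construction of a strong $\H_{YJ}$-subideal of $J$ strictly smaller than $J$, using a witness $a \in J$ with $kh_Y(a) \not\subseteq J$ (which exists because $J$ fails to be $\H_Y$) and an element of $J$ outside $\bigcap Y$. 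Parts (r) and (s) share one product trick. For (r), assume $P$ is not $\H_{YJ}$, so by Theorem \ref{Prime and H-ideal}(b) there exist $f \in P$ and $g \notin P$ with $h_Y(f) \subseteq h_Y(g)$. Given $a \in I$ and $b \in J$ with $h_Y(a) \subseteq h_Y(b)$, form $af \in I \cap P$ and $bg \in J$; then $h_Y(af) \subseteq h_Y(bg)$ forces $bg \in I \cap P$, so $b \in P$ since $g \notin P$; a second round with $ab \in I \cap P$ and $h_Y(ab) = h_Y(b)$ yields $b \in I \cap P \subseteq I$. For (s), after applying (r) to see that one of $P, Q$ is $\H_{YJ}$, pick $p \in P \setminus Q$ (and symmetrically $q \in Q \setminus P$) and mimic the product argument: $ap \in P \cap Q$ and $bp \in J$ with $h_Y(ap) \subseteq h_Y(bp)$ give $bp \in P \cap Q$, and primality forces $b \in Q$. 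The main obstacle is carrying the finite-subset bookkeeping through the strong versions of (p), (q), (r), and (s), where the same ideas work but the notation multiplies.
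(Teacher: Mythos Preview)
Your overall strategy matches the paper's: most items are settled via Theorem \ref{Equivalnet condition by relative ideal} and Theorem \ref{Prime and H-ideal}, and your treatment of (p) is essentially identical (the paper takes $P\in\Mi(I)$ disjoint from $I_\H\setminus I$, which is the same idea as your Zorn argument). A few items are handled differently. For (o) the paper uses the hull identity $(I\cap K)_\H=(IK)_\H$ to get $(I\cap K)_\H\cap J=(IK)_\H\cap J\subseteq IK\subseteq I\cap K$; your $a^2$ trick is the element-level version of the same thing. For (r) the paper does a direct case split: if $I\subseteq P$ then $I\cap P=I$ is already $\H_{YJ}$; if $I\not\subseteq P$, pick $b\in I\setminus P$ and for $a\in P$ use $ab\in I\cap P$ to obtain $kh_Y(a)\cap kh_Y(b)\cap J\subseteq P$, whence $kh_Y(a)\cap J\subseteq P$ by primality. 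Your contrapositive two-step argument (first force $b\in P$ via $bg$, then $b\in I$ via $ab$) is correct but longer. For (s) the paper is much shorter than your product argument: since $P\not\subseteq Q$ and $Q\not\subseteq P$, both $P$ and $Q$ lie in $\Mi(P\cap Q)$, and Theorem \ref{Prime and H-ideal}(a) applied to the $\H_{YJ}$-ideal $P\cap Q$ immediately makes both of them $\H_{YJ}$-ideals.

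The one genuine gap is (q). Your sketch --- ``a witness $a\in J$ with $kh_Y(a)\not\subseteq J$ and an element of $J$ outside $\bigcap Y$'' --- never says what the ideal $I$ actually is, nor why it fails to be an $\H_Y$-ideal; as written it is not a proof. The paper's construction is concrete and different from what you hint at: since $J\not\subseteq\bigcap Y$, choose $P\in Y$ with $J\not\subseteq P$ and set $I=P\cap J\subsetneq J$. For any finite $F\subseteq I\subseteq P$ one has $P\in h_Y(F)$, hence $kh_Y(F)\subseteq P$ and $kh_Y(F)\cap J\subseteq P\cap J=I$; thus $I$ is a strong $\H_{YJ}$-ideal. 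Finally one argues that if $I$ were an $\H_Y$-ideal then $J$ would be forced to be one as well, contradicting the hypothesis. You should replace your sketch with an explicit construction of this kind.
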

\begin{proof}
	(a) and (c). It is clear, according to the fact that the intersection of (resp., strong) $ \H_Y $-ideals is a (resp., strong) $ \H_Y $-ideal and Theorem \ref{Equivalnet condition by relative ideal}.
	
	(b), (d), (f), (h), (l). They follow easily from Theorem \ref{Equivalnet condition by relative ideal}.
	
	(e). \cite[Lemma 3.2]{badie2019extension}, \cite[Proposition 7.11]{badie2019extension} and Theorem \ref{Equivalnet condition by relative ideal}, conclude 
	\[ ( \sqrt{I} )_\H \cap \sqrt{J} = I_\H \cap \sqrt{J} = \sqrt{I_\H} \cap \sqrt{J} = \sqrt{I_\H \cap J} = \sqrt{I \cap J} = \sqrt{I} \cap \sqrt{J} \]
	Now Theorem \ref{Equivalnet condition by relative ideal}, follows that $ \sqrt{I} $ is an $ \H_{Y\sqrt{J}} $-ideal.
	
	(g). \cite[Lemma 2.4]{aliabad2013relative}, and Theorem \ref{Equivalnet condition by relative ideal}, conclude that $ I_\H \cap ( I + J ) = I_\H \cap I + I_\H \cap J \subseteq I $. Hence $ I $ is an $ H_{Y(I+J)} $-ideal, by Theorem \ref{Equivalnet condition by relative ideal}. The similar proof states for strong $ H_{YJ} $-ideals.
	
	(i $\Rightarrow$). Since $ I $ is a (resp., strong) $ \H_{YJ} $-ideal and $ J $ is a (resp., strong) $ \H_Y $-ideal, $ I \cap J $ is a (resp., strong) $ \H_{YJ} $-ideal. Now part (g) follows that $ I \cap J $ is a (resp., strong) $ \H_Y $-ideal.
	
	(i $\Leftarrow$). Since $ I \cap J $ is a (resp., strong) $ \H_{YJ} $-ideal, $ I $ is a (resp., strong) $ \H_{YJ} $-ideal, by part (f).
	
	(j). It follows immediately from (f).
	
	(k). It  is clear that $ I_\H \cap J $ is an $ \H_{YJ} $-ideal containing $ I \cap J $. Now suppose that $ K $ is an $ \H_{YJ} $-ideal containing $ I \cap J $, \cite[Proposition 5.1]{badie2019extension} and Theorem \ref{Equivalnet condition by relative ideal}, follow that
	\[ I_\H \cap J = I_\H \cap J_\H \cap J = ( I \cap J )_\H \cap J \subseteq K_\H \cap J \subseteq K  \]
	Similarly, we can prove it for strong $ \H_{YJ} $-ideals. 
	
	(m). Since $ I_\H \subseteq K_\H \subseteq $, $ I_\H = K_\H $. Now part (k) implies that $ K $ is an $ \H_{YJ} $-ideal. Similarly, we can prove it for strong $ \H_{YJ} $-ideals.
	
	(n). By \cite[Proposition 7.11]{badie2019extension}, $ I \subseteq \sqrt{I} \subseteq I_\H $, now part (m) completes the proof.
	
	(o). Proposition \cite[Proposition 5.1]{badie2019extension}, and Theorem \ref{Equivalnet condition by relative ideal}, deduce that
	\[ (I \cap K)_\H \cap J = ( I  K )_\H \cap J  \subseteq IK \subseteq I \cap K \]
	Thus $ I \cap K $ is an $ \H_{YJ} $-ideal. Similarly, we can prove it for strong $ \H_{YJ} $-ideals.
	
	(p $\Rightarrow$). Theorem \ref{Equivalnet condition by relative ideal}, follows that $ I_\H \cap P \subseteq I $. If $ a,b \in I_\H \setminus I $, then $ a,b \notin P $, so $ ab \notin P $, thus $ ab \notin P  $, since $ ab \in I_\H $, it follows that $ ab \in I_\H \setminus I $. Consequently $ I_\H \setminus I $ is a multiplicatively closed set.
	
	(p $\Leftarrow$). Since $ ( I_\H \setminus I ) \cap I = \emptyset $, there is some prime ideal $ P \in \Mi(I) $ such that $ ( I_\H \setminus I ) \cap P = \emptyset $, thus 
	\[ I_\H \cap P =  \big( ( I_\H \setminus I ) \cup I \big) \cap P \big) = I \cap P = I \]
	Now Theorem \ref{Equivalnet condition by relative ideal}, concludes that $ I $ is an $ \H_{YP} $-ideal.
	
	(q). Since $ J \not \subseteq \bigcap Y $, there is some $ P \in Y $ such that $ J \not \subseteq P $. Set $ I = P \cap J $, it is clear that $ I \subset J $ and $ I $ is a strong $ \H_{YJ} $-ideal. If $ I $ is a (resp., strong) $ \H_{YJ} $-ideal, then Proposition \ref{property of relative J}, follows that $ J $ is a (resp., strong), which contradicts our assumption.
	
	(r). It clear that if $ I \subseteq P $, then $ I $ is a $ \H_{YJ} $-ideal. Now suppose $ I \not\subseteq P $, so $ b \in I \setminus P $ exists. For each $ a  \in P $, $ ab \in I \cap J $, so $ kh_Y(a) \cap kh_Y(b) \cap J = kh_Y(ab) \cap J \subseteq I \cap P $, since  $ kh_Y(b) \not\subseteq P $, it follows that $ kh_Y (a) \cap J \subseteq P $. Consequently, $ P $ is $ \H_{YJ} $-ideal. The similar proof states for strong $ \H_{YJ} $-ideals.
	
	(s). By the assumption, $ P , Q \in \Mi(I \cap P) $, so $ P $ and $ Q $ are $ \H_{YJ} $-ideals, by Theorem \ref{Prime and H-ideal}(a).
\end{proof}

We say that a ring $R$ has the \emph{root property} if for each $ x \in R$ there are $ y \in R $ and $ 2 \leqslant n \in \N $ such that $y^n = x$.

\begin{Pro}
	Let $ Y \subseteq \Sp(R) $ and $ K = \bigcap Y $. The following hold.
	\begin{itemize}
	\item[(a)] An ideal $ I $  of $ R $ is a relative (resp., strong) $ \H_Y $-ideal \ff there is some ideal $ J \supset I $ such that $ I $ is a (resp., strong) $ \H_{YJ} $-ideal.
	\item[(b)] Suppose that $ I \subseteq K $ are ideals of $ R $ and $ I_\H = K_\H $ (resp., $ I_{S\H} = K_{S\H} $). If $ I $ is a relative (resp., strong) $ \H_Y $-ideal, then $ K $ is a relative (resp., strong) $ \H_Y $-ideal.
	\item[(c)] Suppose that $ I $ and $ K $ are ideals of $ R $. If $ I \subseteq K \subseteq I_\H $ (resp., $ I \subseteq K \subseteq K \subseteq I_{S\H} $) and $ I $ is a relative (resp., strong) $ \H_Y $-ideal, then $ K $ is a relative (resp., strong) $ \H_Y $-ideal.
	\item[(d)] If $ I $ is an ideal of $ R $ and $ P $ is a prime ideal of $ R $. If $ I \cap P $ is a relative (resp., strong) $ \H_Y $-ideal, then either $ I $ or $ P $ is a relative (resp., strong) $ \H_Y $-ideal.
	\item[(e)] An ideal $ I $ of a ring $ R $ is a relative (resp., strong) $ \H_Y $-ideal \ff there is some (resp., finite subset $ F \subseteq R $ which $ F \cap I = \emptyset $ ) $ c \in R \setminus I $ such that ($ \Ge{c} \cap kh_Y(F) \subseteq I $) $ \Ge{c} \cap kh_Y(a) \subseteq I $.
	\item[(f)] Suppose that $ I $ is an ideal of $ R $. If $ (K:I) \not\subseteq I $ and $ K \subseteq I $, then $ I $ is a relative strong $ \H_Y $-ideal.
	\item[(g)] Suppose that $ a \in R $, $ K \subseteq \Ge{a} $ and $ R $ has the root property. The principal ideal $ \Ge{a} $ is a relative strong $ \H_Y $-ideal \ff $ (K:a) \not\subseteq \Ge{a} $.
	\end{itemize} 
\label{property of realtive}
\end{Pro}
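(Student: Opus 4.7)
For the routine clauses (a), (b), (c), (e) I plan to lean directly on Theorem~\ref{Equivalnet condition by relative ideal} and the identities of Proposition~\ref{property of relative J}. Part (a) is immediate from Proposition~\ref{property of relative J}(g): if $I$ is $\H_{YJ}$ with $J\not\subseteq I$, then $I+J\supsetneq I$ and $I$ remains $\H_{Y(I+J)}$; conversely any $J\supsetneq I$ automatically satisfies $J\not\subseteq I$. For (b), the same witness $J\not\subseteq I$ works for $K$, since the equality $K_\H\cap J=I_\H\cap J$ (from $I_\H=K_\H$) yields $K_\H\cap J\subseteq I\subseteq K$; moreover $J\subseteq K$ would force $J\subseteq K_\H=I_\H$, hence $I_\H\cap J=J\subseteq I$, contradicting $J\not\subseteq I$. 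Part (c) reduces to (b) via the sandwich $I_\H\subseteq K_\H\subseteq(I_\H)_\H=I_\H$. Part (e) is a reformulation of the definition: given any witness $J\not\subseteq I$ one passes to $\Ge{c}$ for $c\in J\setminus I$, using $I_\H\cap\Ge{c}\subseteq I_\H\cap J\subseteq I$, and in the strong case to the subideal generated by a finite subset of $J\setminus I$.

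Part (d) is the subtlest general clause. Fix a witness $J_0\not\subseteq I\cap P$. Theorem~\ref{Prime and H-ideal}(c) furnishes a (resp., strong) $\H_Y$-ideal $Q\in\Mi(I\cap P)$; since $I\cdot P\subseteq I\cap P\subseteq Q$ and $Q$ is prime, either $P\subseteq Q$ (so $Q=P$ by minimality and $P$ is itself an $\H_Y$-ideal, hence relative) or $I\subseteq Q$, in which case $Q\in\Mi(I)$. In the latter case Proposition~\ref{property of relative J}(r) produces $P$ as an $\H_{YJ_0}$-ideal whenever $I\not\subseteq P$; if moreover $J_0\not\subseteq P$ then Theorem~\ref{Prime and H-ideal}(b) forces $P$ to be $\H_Y$. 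The remaining subcase $J_0\subseteq P$ (with $J_0\not\subseteq I$) requires showing $I$ itself is relative; the plan is to use the identity $kh_Y(ap)=kh_Y(a)\cap kh_Y(p)$ together with the $\H_{YJ_0}$-property of $I\cap P$ to place each $x\in kh_Y(a)\cap J_0$ inside $(I:P)$, and then to combine this with the $\H_Y$-structure transported by $Q\in\Mi(I)$ to recover $x\in I$. I expect this last subcase to be the main obstacle.

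Part (f) has a clean prime-by-prime proof. Let $c\in(K:I)\setminus I$, take a finite $F\subseteq I$ and an element $x=cr\in kh_Y(F)\cap\Ge{c}$. For each $P\in Y$: if $F\subseteq P$ then $x\in P$ by definition of $kh_Y(F)$; otherwise choose $a\in F\setminus P$, observe that $ca\in cI\subseteq K\subseteq P$, and invoke primality of $P$ with $a\notin P$ to conclude $c\in P$, whence $x=cr\in P$. Thus $x$ lies in every $P\in Y$, i.e.\ $x\in\bigcap Y=K\subseteq I$, so $kh_Y(F)\cap\Ge{c}\subseteq I$; since $\Ge{c}\not\subseteq I$, $I$ is a relative strong $\H_Y$-ideal, and the non-strong version is the special case $|F|=1$.

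For (g), $\Leftarrow$ is (f) applied with $I=\Ge{a}$, using $(K:\Ge{a})=(K:a)$. For $\Rightarrow$, use the root property to write $a=b^n$ with $n\geq 2$, so that $kh_Y(a)=kh_Y(b)$ and hence $b\in kh_Y(a)$. By (a) I may pick $J\supsetneq\Ge{a}$ making $\Ge{a}$ a strong $\H_{YJ}$-ideal, and $c\in J\setminus\Ge{a}$; then $bc\in kh_Y(a)\cap J\subseteq\Ge{a}$ gives $bc=ra$, and a direct calculation using $a=b^n$ yields $(c-rb^{n-1})a=0$, so $c-rb^{n-1}\in\An(a)\subseteq(K:a)$. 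If $c-rb^{n-1}\notin\Ge{a}$ we are done. Otherwise $c\equiv rb^{n-1}\pmod{\Ge{a}}$, and the main remaining work is an iteration: exploit the relations $cb^k=r_k a$ with $r_{k+1}\equiv r_k b\pmod{\An(a)}$, and if necessary reapply the root property to $b$ itself, in order to extract a genuine element of $(K:a)\setminus\Ge{a}$. This final computational step is the hardest part of the argument.
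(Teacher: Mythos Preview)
Your treatment of (a), (b), (c), (e) and (f) is correct and in line with the paper; your prime-by-prime argument for (f) is in fact cleaner than the paper's, which instead shows $kh_Y(F)\cap(K:I)\subseteq K$ via the identity $kh_Y(aF)=kh_Y(a)$ for $a\in kh_Y(F)$ together with the fact that $K=\bigcap Y$ is a strong $\H_Y$-ideal.

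There are two genuine gaps.

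\textbf{Part (d).} The paper dispatches this in one line by citing Proposition~\ref{property of relative J}(r), and you are right that the subcase $I\not\subseteq P$, $J_0\subseteq P$ is not literally covered by (r). However, your proposed route through a minimal prime $Q\in\Mi(I)$ and the colon ideal $(I:P)$ is unnecessary and is left unfinished in your plan. The clean resolution is to show directly that $I$ is an $\H_{YJ_0}$-ideal (note $J_0\not\subseteq I$, since $J_0\subseteq P$ but $J_0\not\subseteq I\cap P$). For $a\in I\cap P$ the hypothesis on $I\cap P$ gives $kh_Y(a)\cap J_0\subseteq I$ immediately. For $a\in I\setminus P$ and $x\in kh_Y(a)\cap J_0$ one has $x\in J_0\subseteq P$, so $xa\in I\cap P$; moreover $x\in kh_Y(a)$ forces $h_Y(a)\subseteq h_Y(x)$ and hence $kh_Y(xa)=kh_Y(x)\cap kh_Y(a)=kh_Y(x)$, so that $x\in kh_Y(xa)\cap J_0\subseteq I\cap P\subseteq I$. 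The strong case is identical with the finite set $xF$ in place of $xa$.

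\textbf{Part (g), forward direction.} The ``otherwise'' case you flag as requiring an iteration never occurs, so no further work is needed. From $c\in J\setminus\Ge{a}$ and $kh_Y(a)\cap J\subseteq\Ge{a}$ one has $c\notin kh_Y(a)$. If $c-rb^{\,n-1}\in\Ge{a}$, then $c\in\Ge{a}+\Ge{b^{\,n-1}}=\Ge{b^{\,n-1}}\subseteq kh_Y(b)=kh_Y(a)$, a contradiction. Hence $c-rb^{\,n-1}\in\An(a)\setminus\Ge{a}\subseteq(K:a)\setminus\Ge{a}$ automatically, and the argument terminates in one step, exactly as in the paper.
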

\begin{proof}
	(a). It is clear, by Proposition \ref{property of relative J}(g).
	
	(b). By the assumption, there is some ideal $ J‌ \not\subseteq I $ such that $ I $ is a (resp., strong) $ \H_{YJ} $-ideal, then Theorem \ref{property of relative J}(l), concludes that $ K $ is a (resp., strong) $ \H_{YJ} $-ideal. If $ J \subseteq K‌ \subseteq K_\H $, then $ J‌ \subseteq K_\H = I_\H \cap J \subseteq I $, which is a contradiction, so $ J‌ \not\subseteq K $ and therefore $ K $ is a relative $ \H_Y $-ideal.
	
	(c). It follows immediately from part (b).
	
	(d). It concludes immediately from Proposition \ref{property of relative J}(r).
	 	
	We prove  part (e) for $ \H_Y $-ideals, it is clear the similar proof states for strong $ \H_Y $-ideals.
	
	(e $\Rightarrow$). Since $ I $ is a relative $ \H_Y $-ideal, there is an ideal $ J \supseteq I $ such that $ I $ is an $ \H_{YJ} $-ideal. Thus $ c \in J \setminus I $ exists and $ I_H \cap J \subseteq I $, by Theorem \ref{Equivalnet condition by relative ideal}. For each $ a \in I \subseteq I_\H $ we have, $ kh_Y(a) \cap \Ge{c} \subseteq I_\H \cap J \subseteq I $.
	
	(e $\Leftarrow$). Set $ J = \Ge{c} $. Theorem \ref{Equivalnet condition by relative ideal} follows that $ I $ is a $ \H_{YJ} $-ideal, since $ J‌ \not\subseteq  I‌ $, it follows that $ I $ is a relative $ \H_Y $-ideal.
	
	(f). First we show that $ kh_Y (F) \cap (K:I) \subseteq K $, for each finite subset $ F $ of $ R $. Suppose that $ F $ is a finite subset of $ I $ and $ a \in kh_Y(F) \cap (K:I) $, then $ kh_Y(a) \subseteq kh_Y(F) $ and $ aI \subseteq K $, thus $ kh_Y(a) = kh_Y(aF) $ and $ aF $ is a finite subset of $ K $, since $ K $ is a strong $ \H_Y $-ideal, it follows that $ a \in kh_Y(aF) \subseteq K $. Consequently, $ kh_Y (F) \cap (K:I) \subseteq K $. Now take $ J = (K:I) $, then $ c \in J \setminus I $ exists. For each finite subset $ F $ of $ I $, we have 
	\[ \Ge{c} \cap kh_Y(F) \subseteq J \cap kh_Y(F) = (K:I) \cap kh_Y(F) \subseteq K \subseteq I \]
	Now part (c) follows that $ I $ is a relative strong $ \H_Y $-ideal.
	
	(g $ \Rightarrow $). Since $ \Ge{a} $ is a relative strong $ \H_Y $-ideal, there is some ideal $ J \not\subseteq \Ge{a} $ such that $ \Ge{a} $ is a strong $ \H_{YJ} $-ideal, so $ kh_Y(a) \cap J \subseteq \Ge{a} $ and $ c \in J \setminus \Ge{a} $ exists, thus $ c \in J \setminus kh_Y(a) $. Since $ R $ has the root property, there is some $ b \in R $ and $ 2 \leqslant n \in \N $ such that $ a = b^n $. Then $ bc \in J \cap kh_Y(a) \subseteq \Ge{a} $, so $ bc = ar = b^n r $, thus $ b ( c - n^{n-1} r ) = 0 $. We claim $ c - b^{n-1} \notin \Ge{a} $, on contrary, $ c \in \Ge{a} \subseteq kh_Y(a) $, which is a contradiction. Hence $ c - b^{n-1} \in (K:b) = (K:a) $ and therefore $ c \in (K:a) \setminus \Ge{a} $.
	
	(g $ \Leftarrow $).  It follows from part (f).
\end{proof}

A ring $ R $ is called \emph{arithmetical}, if for every ideal $ I $, $ J $ and $ K $ of $ R $, we have
\[ I \cap ( J + K ) = ( I \cap J ) + ( I \cap K) \]

\begin{Pro}
	Suppose that $ Y \subseteq \Sp(R) $ and $ J $ is an ideal of $ R $.
	\begin{itemize}
		\item[(a)] $ I $ is a maximal element of  $ \{ I : J \text{ is a (resp., strong) } \H_{J}\text{-factor of } I  \} $ \ff $ I $ is a maximal element of 
		\[ \{ P : P \text{ is a prime (resp., strong) } \H_Y\text{-ideal and } J \not\subseteq P \}. \]
		\item[(b)] Every maximal element of $ \{ I \subset J : I \text{ is a (resp., strong) } \H_{YJ} \text{-ideal } \} $ is of the form $ P \cap I $, in which $ P $ is a maximal element of
		\[ \{ P : P \text{ is a prime (resp., strong) } \H_Y\text{-ideal and } J \not\subseteq P \}. \]
		\item[(c)] Suppose that $ I $ is an ideal of $ R $. If $ I $ has a (resp., strong) $ \H_Y $-factor, then the family of all (resp., strong) $ H_Y $-factors of $ I $ has a maximal element, which contains $ I $.
		\item[(d)] If $ J $ is a minimal (resp., strong) $ \H_Y $-factor of an ideal $ I $, then $ I + J $ is a minimal (resp., strong) $ \H_Y $-factor of $ I $ containing $ I $.
		\item[(e)] If the largest (resp., strong) $ \H_Y $-factor of an ideal $ I $ exists, then it is of the form (resp., $ \{ x \in R : kh_Y(x) \cap \Ge{F} \subseteq I \text{ for all finite subset } F \text{ of } I \} $) $ \{ x \in R : kh_Y(x) \cap \Ge{a} \subseteq I \text{ for all  } a \in I \} $.
		\item[(f)] Suppose $ I $ is an ideal of $ R $. If 
		\[ K = \bigcap \{ P \in \Mi(I) : I \text{ is not a (resp., strong) } \H_Y \text{-ideal } \}, \]
		then (resp.,  $ I_{S\H} \cap K \subseteq \sqrt{I} $) $ I_\H \cap K \subseteq \sqrt{I} $. Also if $ I $ is a relative semi-prime ideal, then $ K $ is the greatest (resp., strong) $ \H_Y $-factor of $ I $.
		\item[(g)] If $ I $ is a relative (resp., strong) $ \H_Y $-ideal, then the greatest (resp., strong)‌ $ \H_Y $-factor of $ I $ exists.
		\item[(h)] If $ R $ is arithmetical, then every relative (resp., strong)‌ $ \H_Y $-ideal has the greatest (resp., strong) $ \H_Y $-factor.
		\item[(i)] Suppose $K = \bigcap Y $.  ideal $ J $ is a minimal (resp., strong) $ \H_Y $-factor of an ideal $ I $ \ff $ J = \Ge{e} $, for some $ e \notin \sqrt{I} $, such that $ I  \cap \Ge{K,e} $ is a (resp., strong) $ \H_Y $-ideal and $ \Ge{e} $ is a minimal principal ideal which is not contained in $ I $.   
	\end{itemize}
\label{H_Y factor}
\end{Pro}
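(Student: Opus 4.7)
The proposition bundles nine linked assertions; throughout I use the translation of ``$I$ is an $\H_{YJ}$-ideal'' as ``$I_\H \cap J \subseteq I$'' from Theorem \ref{Equivalnet condition by relative ideal}(b), combined with the closure identities of Proposition \ref{property of relative J} and the characterisations of relative $\H_Y$-ideals in Proposition \ref{property of realtive}. I will write out only the non-strong version; the strong variant is formally identical with $I_{S\H}$ in place of $I_\H$ and finite subsets in place of singletons.

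Parts (a) and (b): the core claim is that a maximal $I$ admitting $J$ as an $\H_Y$-factor is a prime $\H_Y$-ideal with $J \not\subseteq I$. From maximality, Proposition \ref{property of relative J}(m) forces $I = I_\H$ ($I_\H$ is still $\H_{YJ}$, and $J \not\subseteq I_\H$ since otherwise $J = I_\H \cap J \subseteq I$), so $I$ is an $\H_Y$-ideal and hence an intersection of primes in $Y$; since $J \not\subseteq I$ some such prime $P \supseteq I$ satisfies $J \not\subseteq P$, and then $P$ is itself $\H_{YJ}$ with $J \not\subseteq P$, so maximality forces $P = I$. The converse in (a) is symmetric. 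For (b), extend a maximal $\H_{YJ}$-subideal $M$ of $J$ to a maximal $\H_{YJ}$-ideal of $R$ by Zorn, apply (a) to obtain a maximal prime $\H_Y$-ideal $P$ with $J \not\subseteq P$, and observe $P \cap J$ is an $\H_{YJ}$-subideal of $J$ containing $M$, forcing $M = P \cap J$.

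Parts (c)--(e) are structural. Part (c) is Zorn on the family of $\H_Y$-factors: a chain union $\bigcup J_\alpha$ satisfies $I_\H \cap \bigcup J_\alpha = \bigcup (I_\H \cap J_\alpha) \subseteq I$ and is not contained in $I$; Proposition \ref{property of relative J}(g) then lets one replace the maximal factor $J$ by $I + J$ to enforce the containing-$I$ clause. Part (d) applies the modular identity: any factor $J' \supsetneq I$ sitting between $I$ and $I + J$ writes as $J' = I + (J' \cap J)$ with $J' \cap J$ a sub-factor of $J$, so minimality of $J$ gives $J' \cap J = J$ and thus $J' = I + J$. Part (e) proposes
\[ L = \{ x \in R : kh_Y(x) \cap \Ge{a} \subseteq I \text{ for all } a \in I \}, \]
verifies $L$ is an ideal (the nontrivial closure-under-addition step uses $h_Y(x+y) \supseteq h_Y(x) \cap h_Y(y)$ combined with the containment $\Ge{a} \subseteq I$), and shows every factor $J$ sits inside $L$ by taking $b \in J$ and noting $\Ge{b} \subseteq J$ gives $kh_Y(b) \cap \Ge{a} \subseteq kh_Y(a) \cap J \subseteq I$, so when a greatest factor exists it must be $L$.

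Parts (f)--(i) form the technical core. For (f) I read the definition of $K$ with the obvious correction ``$P$ is not an $\H_Y$-ideal'' (as written the condition does not depend on $P$) and apply the minimal-prime decomposition $I_\H = \bigcap_{P \in \Mi(I)} P_\H$ of \cite[Proposition 7.11]{badie2019extension}: primes $P \in \Mi(I)$ that are $\H_Y$-ideals satisfy $P_\H = P$, while the remaining primes contain $K$ by construction, so intersecting with $K$ collapses the second group and yields $I_\H \cap K \subseteq \bigcap_{P \in \Mi(I)} P = \sqrt{I}$. Under semiprimeness this is the $\H_{YK}$-condition, and maximality of $K$ among factors follows from (e). Part (g) transports this via Proposition \ref{property of realtive}(c) applied to $\sqrt{I} \supseteq I$; part (h) uses the arithmetical identity $I \cap (A + B) = (I \cap A) + (I \cap B)$ to pull the greatest factor of $\sqrt{I}$ back to a factor of $I$. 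For (i), the forward direction extracts any $e \in J \setminus \sqrt{I}$ (nonempty, else $J \subseteq \sqrt{I}$ would contradict factorship via radicalisation), notes $\Ge{e}$ is a sub-factor by Proposition \ref{property of relative J}(c), and forces $J = \Ge{e}$ by minimality; the auxiliary condition that $I \cap \Ge{K, e}$ be an $\H_Y$-ideal records the residual $\H_{YJ}$-content after absorbing the universal kernel $K = \bigcap Y$, and the reverse direction unwinds this definition.

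The main obstacle I anticipate is part (f): the split of $\Mi(I)$ into ``$\H_Y$ versus non-$\H_Y$ primes'' and its compatibility with the decomposition $I_\H = \bigcap P_\H$ must be made precise, and (g)--(h) all rest on this reduction, while (e) only supplies maximality of $K$ once one knows it is a factor. A secondary obstacle is the primality step in (a), where the $\H_Y$ step must precede the prime step: one does not apply Proposition \ref{property of relative J}(p) directly but descends via the Hilbert representation $I = \bigcap\{P \in Y : I \subseteq P\}$ and re-uses maximality on a single such $P$ avoiding $J$.
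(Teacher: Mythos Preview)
Your handling of parts (a)--(d) tracks the paper's proof closely; the modular-law phrasing of (d) is the paper's argument in slightly different dress. Part (e) is muddied by a typo in the statement (the displayed set should read $kh_Y(a)\cap\langle x\rangle$, not $kh_Y(x)\cap\langle a\rangle$; as printed the set is all of $R$ since $\langle a\rangle\subseteq I$), and your closure-under-addition sketch is accordingly off: the paper instead uses the \emph{assumed} existence of the greatest factor $J$ to place $x,y\in J$, whence $x+y\in J$ and the defining inclusion follows. This is minor once the typo is corrected.

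There are two genuine gaps. In (f) you correctly run the split $\Mi(I)=\mathcal A\cup\mathcal B$ to obtain $I_\H\cap K\subseteq\sqrt I$, but ``maximality of $K$ among factors follows from (e)'' does not work: part (e) is conditional on a greatest factor already existing, so it cannot be invoked to establish one. The paper's argument is direct and short: if $J$ is any $\H_Y$-factor of $I$, then every $P\in\Mi(I)$ is $\H_{YJ}$ by Theorem~\ref{Prime and H-ideal}(a), and those $P\in\mathcal B$ that fail to be $\H_Y$-ideals must then contain $J$ by Theorem~\ref{Prime and H-ideal}(b); hence $J\subseteq\bigcap\mathcal B=K$. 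Relatedly, your plan for (h)---pull back the greatest factor of $\sqrt I$---is not the paper's route and you do not indicate how the descent would go. The paper instead observes that in an arithmetical ring $I_\H\cap(J+K)=(I_\H\cap J)+(I_\H\cap K)\subseteq I$, so the sum of two $\H_Y$-factors is again one; the maximal factor from (c) is therefore greatest.

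Part (i) is where your sketch is thinnest. Extracting $e$ and forcing $J=\langle e\rangle$ by minimality is correct, but ``records the residual $\H_{YJ}$-content'' hides the whole argument. The paper's mechanism is an idempotent-type reduction: minimality applied to $\langle e^2\rangle\subseteq\langle e\rangle$ forces equality, hence $e=e^2r$ for some $r\in R$; one then checks the explicit identity $\langle K,e\rangle=\bigcap\{P\in Y:1-er\notin P\}$, which exhibits $\langle K,e\rangle$ as $Y$-Hilbert and thus an $\H_Y$-ideal, after which Proposition~\ref{property of relative J}(i) yields that $I\cap\langle K,e\rangle$ is an $\H_Y$-ideal. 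Without this step there is no reason for $\langle K,e\rangle$ to be $\H_Y$, and your reverse direction cannot ``unwind'' anything.
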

\begin{proof}
	(a). We show this part for $ \H_Y $-ideals, the similar proof states for strong $ \H_{YJ} $-ideals. Suppose that $ I \in \maxl \{ I : J \text{ is a } \H_{Y}\text{-factor of } I \} $. Since $ J $ is an $ \H_Y $-factor of every prime $ \H_Y $-ideal which not contains $ J $, it is sufficient to $ I $ is a prime $ \H_Y $-ideal. We claim $ J \not \subseteq I_\H $, otherwise, Theorem \ref{Equivalnet condition by relative ideal}, concludes $ J = I_\H \cap J \subseteq I $, which is a contradiction. Since $ I_\H $ is an $ \H_{YJ} $-ideal, it follows that $ I_\H = I $, by the maximality of $ I $, hence $ I $ is an $ \H_Y $-ideal, consequently, $ I $ is semi-prime, by \cite[Lemma 3.2]{badie2019extension}. Since $ I \not \subseteq J $, there is some $ P \in \Mi(I) $ such that $ J \not \subseteq P $. $ P $ is an $ \H_{YJ} $-ideal, by Theorem \ref{Prime and H-ideal}. Hence $ P = I $, by the maximality of $ I $.
	
	(b). Suppose that $ I $ a maximal element of $ \{ I \subset J : I \text{ is a } \H_{YJ} \text{-ideal } \} $. If $ J \subseteq I_\H $, then Theorem \ref{Equivalnet condition by relative ideal}, concludes $ J = J \cap I_\H \subseteq I  $, which is a contradiction. Hence $ J \not\subseteq I_\H $, so there is some $ Q \in \Mi(I_\H) $, such that $ J \not\subseteq P $. By \cite[Theorem 3.13]{badie2019extension}, $ Q $ is a prime $ \H_Y $-ideal. So there is some maximal element $ P $ of $ \{ P : P \text{ is a prime } \H_Y\text{-ideal and } J \not\subseteq P \} $ such that contains $ Q $. Thus $ I = I \cap J \subseteq Q \cap J \subseteq P \cap J \subseteq J $, since $ P \cap J $ is an $ \H_{YJ} $-ideal, it follows that $ I = P \cap J $.
	
	(c). If $ \mathscr{C} $ is a chain of $ H_Y $-factor of $ I $, then $ \big( \bigcup_{J\in \mathscr{C} } J \big) \cap I_\H = \bigcup_{J\in \mathscr{C} } ( J \cap I_\H ) \subseteq I  $ and clearly, $ \bigcup_{J\in \mathscr{C} } J \not\subseteq I $, so $ \bigcup_{J\in \mathscr{C} } J $ is an $ \H_Y $-factor of $ I $, thus the family of all $ \H_Y $-factor of $ I $ has a maximal element, by Zorn's lemma. Now Proposition \ref{property of realtive}(a), follows that $ I \subset J $.
	
	(d). Since $ J $ is a (resp., strong) $ \H_Y $-factor of $ I $, $ I + J $ is a $ \H_Y $-factor of $ I $, by Proposition \ref{property of relative J}(g). If $ K \subseteq I + J  $ is an $ \H_Y $-factor of $ I $ containing $ I $, then $ k \in K \setminus I $ exists, hence there are $ i \in I $ and $ j \in J $ such that $ k = i + j $, so $ j = k - i \in K \setminus I $, and therefore $ j \in ( K \cap J ) \setminus I $, so $ K \cap J $ is an $ \H_Y $-factor of $ I $, by Proposition \ref{property of relative J}(a). Now the minimality of $ J $ concludes that $ K \cap J = J $, hence $ J \subseteq K $, and therefore $ I + J \subseteq K $.
	
	(e). Suppose $ J $ is the largest $ \H_Y $-factor of $ I $ and $ K = \{ x \in R : kh_Y(x) \cap \Ge{a} \subseteq I \quad \forall a \in I \} $. $ x , y \in K $ implies that $ kh_Y(a) \cap \Ge{x} \subseteq I $ and $ kh_Y(a) \cap \Ge{y} \subseteq I $, for all $ a \in I $, so $ \Ge{x}  $ and $ \Ge{y} $ are $ \H_Y $-factors of $ I $, since $ J $ is the greatest $ \H_Y $-factor of $ I $, it follows that $ x , y \in J $, thus $ kh_Y(a) \cap \Ge{x+y} \subseteq kh_Y(a) \cap J \subseteq I $, for all $ a \in I $, hence $ x + y \in K $. Now suppose $ x \in K $ and $ r \in R $, then $ kh_Y(a) \cap \Ge{rx} \subseteq kh_Y(a) \cap \Ge{x} \subseteq I $ and therefore $ rx \in K $. Consequently, $ K $ is an ideal of $ I $. Also for every $ a \in I $,
	\[ kh_Y(a) \cap K = kh_Y(a) \cap \big(\bigcup_{x \in K} \Ge{x} \big) = \bigcup_{x \in K}  kh_Y(a) \cap \Ge{x} \subseteq I \]
	Hence $ I $ is an $ \H_{YK} $-ideal. Since for each $x \in J$ we have  $ kh_Y(a) \cap \Ge{x} \subseteq kh_Y(a) \cap J \subseteq I $, for all $ a \in I $, it follows that $ J \subseteq K $, and thus $ J = K $. Similarly, we can show this part for strong $ \H_Y $-factors.
	
	(f). Set $ \cal{A} = \{ P \in \Mi(I) : P \text{ is an } \H_Y \text{-ideal} \} $ and $ \cal{B} = \Mi(I) \setminus \cal{A}  $.  By Proposition \cite[Proposition 7.11]{badie2019extension},
	\begin{align*}
	I_\H \cap K & = ( \sqrt{I} )_\H \cap K  =  \big( \bigcap_{ P \in \Mi(I) } P \big)_\H \cap K \\
				& = \big( \bigcap_{ P \in \Mi(I) } P_\H \big) \cap K = \big( \bigcap_{ P \in \cal{A} } P \big) \cap \big( \bigcap_{ P \in \cal{B} } P_H \big) \cap \big( \bigcap_{ P \in \cal{B} } P \big) \\
				& = \big( \bigcap_{ P \in \cal{A} } P \big) \cap \big( \bigcap_{ P \in \cal{B} } P \big) = \sqrt{I}
	\end{align*}
	Now suppose that $ I $ is a semi-prime ideal, then $ I_H \cap K = \sqrt{I} = I $, and therefore $ I $ is an $ \H_{YK} $-ideal. If $ J $ is an $ \H_{Y} $-factor of $ I $, then for each $ P \in \cal{B} $, then every $ P \in \cal{B} $ is an $ \H_{YJ} $-ideal, by Theorem \ref{Prime and H-ideal}(a), since every $ P \in \cal{B} $ is a not $ H_Y $-ideal, it follows that $ J \subseteq P $, by Theorem \ref{Prime and H-ideal}(b), hence $ J \subseteq K $. Since $ I $ is a relative $ \H_Y $-ideal, it has a $ \H_Y $-factor $ J \subseteq K $ and therefore $ K \not\subseteq I $. Consequently, $ K $ is the greatest $ \H_Y $-factor of $ I $.
	
	(g). Since $ I $ is a relative (resp., strong) $ \H_Y $ -ideal, $ \sqrt{I} $ is $ \H_Y $-ideal, by Proposition \ref{property of realtive}(b). Now part (f) deduces that $ \sqrt{I} $ has the greatest  (resp., strong) $ \H_Y $-factor.
	
	(h). Suppose that $ I $ is a relative (resp., strong)‌ $ \H_Y $-ideal. Part (c) deduces that $ I $ has a maximal (resp., strong) $ \H_Y $-factor $ J $. If $ K $ is a $ \H_Y $-factor of $ I $, then 
	\[ I_\H \cap (J+K)‌ = I_\H \cap J +‌ I_\H \cap J \subseteq I \]
	Thus $ J +‌ K $  is a $ \H_Y $-factor of $ I $, and therefore $ K \subseteq J $, by the maximality of $ J $. Hence $ J $ is the greatest $ \H_Y $-factor of $ I $.
	
	We prove part (i) for $ \H_Y $-ideals, similarly one can show it for strong $ \H_Y$-ideals
	
	(i$\Rightarrow$). Set $ e \in J \setminus I $. Proposition \ref{property of relative J}(b), concludes that $ I $ is an $ \H_{Y\Ge{e}} $-ideal, hence $ J = \Ge{e} $, by the minimality of $ J $. If for some $ n \in \N $, $ e^n \in I $, then $ e \in kh_Y(e) \cap J = kh_Y(e^n) \cap J \subseteq I $, which is a contradiction. Hence $ e \notin \sqrt{I} $. Since $ \Ge{e^2} \subseteq \Ge{e} $, $ I $ is an $ \H_{Y\Ge{e^2}} $-ideal, by Proposition \ref{property of relative J}(b). The minimality of $ J $ follows that $ \Ge{e} = \Ge{e^2} $, thus  for some $ r \in R $, $ e = e^2 r $. Now we show that $ \Ge{e,K} = \bigcap_{ 1-er \notin P \in Y } P $. If $ 1 - er \notin P $, then $ e (1 - er ) = e - e^2r = 0 \in P $, so $ e \in R $, hence $ \Ge{e,K} \subseteq P $ and therefore $ \Ge{e,K} \subseteq \bigcap_{ 1-er \notin P \in Y } P $. If $ y \in \bigcap_{ 1-er \notin P \in Y } P $, since $ \bigcap_{ 1-er \notin P \in Y } P = ( K : 1-er ) $, it follows $ y -yer = y (1-er) \in K $ and thus $ y \in \Ge{K,e} $. Consequently, $ \Ge{K,e} = \bigcap_{ 1-er \notin P \in Y } P  $. Since $ \bigcap_{1-er \notin P \in Y  } P $ is a $ Y $-Hilbert, $ \Ge{e} $ is an $ \H_Y $-ideal. Proposition \ref{property of relative J}(i),  deduces that $ I \cap \Ge{K,e} $ is an $ \H_Y $-ideal. If $ \Ge{a} \subseteq \Ge{e} $, then $ I $ is an $ \H_{Y\Ge{a}} $-ideal, by Proposition \ref{property of relative J}(b). Now the minimality of $ J $ implies that $ \Ge{a} \subseteq I $, hence $ \Ge{e} $ is a minimal principal ideal which not contained in $ I $.
	
	(i$\Leftarrow$). Since $ J = \Ge{e} $ and $ \Ge{K,e} \cap I $ are $ \H_Y $-ideals, $I$ is an $ \H_{Y\Ge{K,e}} $-ideal, by Proposition \ref{property of relative J}(i). Proposition \ref{property of relative J}(b), implies that $ I $ is a $H_{YJ} $-ideal, since $ e \notin I $, it follows that $ J $ is an $ \H_Y $-factor of $I$. If $ K \subset \Ge{e} $ is an $ \H_Y$-factor of $ I $, then $ k \in K \setminus I $ exists, then $ \Ge{k} \subseteq K \subset \Ge{e} $ and $ \Ge{k} \not\subseteq I $, which contradicts the minimality of $ \Ge{e} $. Hence $ J $ is a minimal $\H_Y$-factor of $I$.
\end{proof}

\begin{Cor}
	Suppose $ Y \subseteq \Sp(R) $ and $ I $ is a semi-prime ideal of $ R $. $ I $ is a relative (resp., strong) $ \H_Y $-ideal \ff in each representation of $ I $ as the intersection of prime ideals, there is a (resp., strong) $ \H_Y $-ideal.
\end{Cor}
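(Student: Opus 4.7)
The plan is to read the corollary through the lens of Theorem \ref{Prime and H-ideal} and the characterization in Theorem \ref{Equivalnet condition by relative ideal}. Semi-primeness provides the canonical representation $I=\bigcap \Mi(I)$, which mediates between arbitrary prime-intersection representations and the minimal-prime side of the argument.

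For the forward direction, I would assume $I$ is semi-prime and a relative (resp., strong) $\H_Y$-ideal, so by definition there is an ideal $J\not\subseteq I$ for which $I$ is an $\H_{YJ}$-ideal. By Theorem \ref{Prime and H-ideal}(a), every $Q\in\Mi(I)$ inherits the (strong) $\H_{YJ}$-property, and by Theorem \ref{Prime and H-ideal}(b) each such $Q$ is either a (strong) $\H_Y$-ideal or satisfies $J\subseteq Q$. Since $J\not\subseteq I=\bigcap\Mi(I)$, at least one $Q_{0}\in\Mi(I)$ fails to contain $J$ and must therefore be a (strong) $\H_Y$-ideal; this $Q_{0}$ supplies the required $\H_Y$-ideal in the intersection representation. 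The strong version proceeds identically with the corresponding strong results.

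For the reverse direction, I would suppose $I=\bigcap_{P\in\mathcal{P}}P$ is a representation in which some $P_{0}\in\mathcal{P}$ is a (strong) $\H_Y$-ideal. After discarding the redundant primes from $\mathcal{P}$ one may assume $P_{0}$ itself is not redundant; set $J=\bigcap_{P\in\mathcal{P}\setminus\{P_{0}\}}P$, so that $J\not\subseteq I$ by irredundancy. Because $I\subseteq P_{0}$ and $P_{0}$ is $\H_Y$, the $\H_Y$-closure satisfies $I_{\H}\subseteq P_{0}$ (resp., $I_{S\H}\subseteq P_{0}$), hence $I_{\H}\cap J\subseteq P_{0}\cap J=I$. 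Theorem \ref{Equivalnet condition by relative ideal} then yields that $I$ is a (strong) $\H_{YJ}$-ideal, and since $J\not\subseteq I$, $I$ is a relative (strong) $\H_Y$-ideal.

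The main technical point is the irredundancy reduction in the reverse direction, which is what guarantees $J\not\subseteq I$; in the forward direction, the clean identification of the $\H_Y$-prime $Q_{0}$ as an element of the representation is free, since the semi-prime hypothesis forces $I=\bigcap\Mi(I)$ to already be such a representation.
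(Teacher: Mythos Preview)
Both directions of your outline have genuine gaps.

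\textbf{Forward direction.} The corollary asks for an $\H_Y$-ideal in \emph{each} representation of $I$ as an intersection of primes, but you produce one only in the canonical representation $I=\bigcap\Mi(I)$. Knowing that some $Q_{0}\in\Mi(I)$ is an $\H_Y$-ideal says nothing, by itself, about an arbitrary representation $I=\bigcap_\alpha P_\alpha$: for infinite families there is no reason any $P_\alpha$ need lie below $Q_{0}$ (compare $(0)=\bigcap_p(p)$ in $\Z$, where the unique minimal prime $(0)$ appears as no $P_\alpha$). The paper's proof starts instead from an \emph{arbitrary} representation and, taking $J$ to be the greatest $\H_Y$-factor supplied by Proposition~\ref{H_Y factor}(f), argues via Theorem~\ref{Prime and H-ideal} that if no $P_\alpha$ were an $\H_Y$-ideal then $J\subseteq\bigcap_\alpha P_\alpha=I$, a contradiction.

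\textbf{Reverse direction.} The step ``after discarding the redundant primes one may assume $P_{0}$ is not redundant'' cannot be carried out in general: in an infinite representation every prime may be redundant (again $(0)=\bigcap_p(p)$ in $\Z$), so no reduction leaves you with a surviving $\H_Y$-prime that is irredundant, and without irredundancy you cannot conclude $J=\bigcap_{P\ne P_{0}}P\not\subseteq I$. Your argument thus uses only the existence of \emph{one} representation containing an $\H_Y$-prime, which is strictly weaker than the hypothesis. The paper exploits the full hypothesis by setting $J=\bigcap\{\,P\in\Mi(I):P\text{ is not an }\H_Y\text{-ideal}\,\}$: if $J\subseteq I$ this very intersection would be a representation of $I$ with no $\H_Y$-prime, contrary to assumption; and Proposition~\ref{H_Y factor}(f) gives $I_\H\cap J\subseteq\sqrt{I}=I$ directly, so $J$ is the desired $\H_Y$-factor.
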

\begin{proof}
	($ \Rightarrow $). By Proposition \ref{H_Y factor}(f), $ I $ has the greatest $ \H_Y $-factor $ J $. Proposition \ref{property of relative J}(e), concludes $ J $ is a semi-prime. If $ I = \bigcap_{ P \in \cal{A} } $ is a representation of prime ideals, then Theorem \ref{Prime and H-ideal}(a), deduces that $ P_\alpha $ is an $ \H_{YJ} $-ideal, for each $ \alpha \in A $. If for every $ \alpha \in \cal{A} $, $ P_\alpha $ is not an $ \H_Y $-ideal, then Theorem \ref{Prime and H-ideal}(b), concludes $ P_\alpha \supseteq J $ and therefore $ I = \bigcap_{ P \in \cal{A} } P \supseteq J $, which is a contradiction.
	
	($ \Leftarrow $). Set
	\[ J = \bigcap \{ P \in \Mi(I) : I \text{ is not a (resp., strong) } \H_Y \text{-ideal } \}. \]
	Then $ J \not\subseteq I $ and therefore Theorem \ref{H_Y factor}(f), concludes $ I $ is a relative $ \H_Y $-ideal.
\end{proof}

\begin{Thm}
	Suppose that $X,Y \subseteq \Sp(R)$ and $J$ is an ideal of $R$. Every (resp., strong) $\H_{XJ}$-ideal is a (resp., strong) $\H_{YJ}$-ideal \ff every prime (resp., strong) $\H_X$-ideal which is not containing $J$ is a (resp., strong) $\H_Y$-ideal.
\end{Thm}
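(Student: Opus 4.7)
The plan is to reduce both implications to the prime criterion in Theorem \ref{Prime and H-ideal}(b), and to use the minimal-prime chain that appears in the proof of Theorem \ref{Equivalnet condition by relative ideal}(a)$\Rightarrow$(b) to pass from primes back to a general $\H_{YJ}$-conclusion. The forward direction is almost immediate: if $P$ is a prime $\H_X$-ideal with $J\not\subseteq P$, then the easy direction of Theorem \ref{Prime and H-ideal}(b) makes $P$ an $\H_{XJ}$-ideal, the hypothesis upgrades this to $P$ being an $\H_{YJ}$-ideal, and — since $J\not\subseteq P$ is preserved — the other direction of Theorem \ref{Prime and H-ideal}(b) forces $P$ to be an $\H_Y$-ideal. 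The strong version is word-for-word identical.

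For the converse let $I$ be an $\H_{XJ}$-ideal and fix any $P\in\Mi(I)$. By Theorem \ref{Prime and H-ideal}(a), $P$ is an $\H_{XJ}$-ideal; by Theorem \ref{Prime and H-ideal}(b) either $J\subseteq P$ or $P$ is an $\H_X$-ideal. In the second subcase, if $J\not\subseteq P$ the hypothesis upgrades $P$ to an $\H_Y$-ideal, while if $J\subseteq P$ we are trivially done. In all cases $P_{\H_Y}\cap J\subseteq P$. Imitating the chain from the proof of Theorem \ref{Equivalnet condition by relative ideal}(a)$\Rightarrow$(b) with $\H_Y$ in place of $\H$, and using $I_{\H_Y}=(\sqrt{I})_{\H_Y}$,
\[
I_{\H_Y}\cap J \;=\; (\sqrt{I})_{\H_Y}\cap J \;\subseteq\; \Bigl(\bigcap_{P\in\Mi(I)} P_{\H_Y}\Bigr)\cap J \;\subseteq\; \bigcap_{P\in\Mi(I)}(P_{\H_Y}\cap J) \;\subseteq\; \sqrt{I}\cap J.
\]

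To bridge the $\sqrt{I}$-to-$I$ gap one switches back to the hypothesis on $I$: given $a\in I_{\H_Y}\cap J$ the inclusion above yields $a\in J$ together with $a^n\in I$ for some $n\in\N$, so $a\in kh_X(a)=kh_X(a^n)$, and the $\H_{XJ}$-property of $I$ forces $a\in kh_X(a^n)\cap J\subseteq I$. Hence $I_{\H_Y}\cap J\subseteq I$, i.e.\ $I$ is an $\H_{YJ}$-ideal by Theorem \ref{Equivalnet condition by relative ideal}(b). The strong case repeats the same script with finite subsets $F\subseteq I$ in place of singletons, using the product-of-witnesses device from the proof of Theorem \ref{Prime and H-ideal}(a) to collapse finitely many exponents into one. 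The one real obstacle is this final step: one cannot close the gap using $\H_Y$ alone because $I$ need not be semi-prime, so the trick is to remember that $I$ still satisfies the \emph{original} $\H_{XJ}$-hypothesis and invoke it precisely at the last moment.
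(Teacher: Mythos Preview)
Your proof is correct, and the forward direction matches the paper. For the converse the paper takes a genuinely different route: it introduces the auxiliary set $Z$ of prime $\H_X$-ideals not containing $J$, argues by contradiction (picking a minimal prime of $I$ avoiding a bad element) that $I$ is an $\H_{ZJ}$-ideal, then uses Theorem~\ref{Equivalnet condition by relative ideal} to produce an $\H_Z$-ideal $K\supseteq I$ with $K\cap J\subseteq I$, and finally invokes the external \cite[Corollary~6.4]{badie2019extension} (every $\H_Z$-ideal is an $\H_Y$-ideal once $Z$ consists of $\H_Y$-ideals) to transfer $K$ to an $\H_Y$-ideal and conclude. Your argument bypasses both the auxiliary space $Z$ and that external corollary: you run the minimal-prime chain directly with the $\H_Y$-closure, land in $\sqrt{I}\cap J$, and then --- the point you correctly flag as the crux --- switch back to the original $\H_{XJ}$-hypothesis on $I$ to close the radical gap. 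This is more self-contained and makes the role of the $X$-hypothesis explicit, whereas the paper's route is more structural, factoring through a third subset of $\Sp(R)$. One small remark: in the strong case the ``product-of-witnesses'' device is not actually needed at the final step, since you are testing membership of a single element $a\in I_{S\H_Y}\cap J$ and the singleton $\{a^n\}\subseteq I$ already triggers the strong $\H_{XJ}$-property.
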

\begin{proof}
	$\Rightarrow$). Suppose that $P$ is a prime (resp., strong) $\H_X$-ideal which is not containing $J$, then $P$ is a (resp., strong) $\H_{XJ}$-ideal and therefore $P$ is a (resp., strong) $\H_{YJ}$-ideal, by the hypothesis. Since $J \not\subseteq P$, $P$ is $\H_Y$-ideal, by Theorem \ref{Prime and H-ideal}. 
	
	$\Leftarrow$). Set $Z$ the family of all prime (resp., strong) $\H_X$-ideals which are not containing $J$. Suppose that $I$ is a (resp., strong) $\H_{XJ}$-ideal. We claim $I$ is a (resp., strong) $\H_{ZJ}$-ideal, on contrary $a \in I$ and $b \in J$  exist such that $h_Z(a) \subseteq h_Z(b)$ and $ b \notin I $, then $ P \in \Mi(I) $ exists such that $ b \notin P$, so $ J \not\subseteq P $, hence $P$ is a (resp., strong) $\H_X$-ideal, by Theorem \ref{Prime and H-ideal}. Thus $ P \in Z $, since $ h_Z(a) \subseteq h_Z(b) $ and $ a \in I \subseteq P $, it follows that $ b \in kh_Z(b) \subseteq kh_Z(a) \subseteq P $, which is a contradiction. Hence $I$ is a (resp., strong) $\H_{ZJ}$-ideal, so there is some (resp., strong) $\H_Z$-ideal $K$ such that $ K \cap J \subseteq I $, by Theorem \ref{Equivalnet condition by relative ideal}. Since every element of $Z$ is a (resp., strong) $\H_Y$-ideal, $K$ is a (resp., strong) $\H_Y$-ideal, by \cite[Corollary 6.4]{badie2019extension}. Now Theorem \ref{Equivalnet condition by relative ideal}, concludes $I$ is a (resp., strong) $\H_{YJ}$-ideal.
\end{proof}

\begin{Thm}
	Suppose $ Y \subseteq \Sp(R) $ and $ \bigcap Y = \{ 0 \} $. The following statements are equivalent.
	\begin{itemize}
		\item[(a)] Every proper ideal of $ R $ is a relative strong $ \H_Y $-ideal.
		\item[(b)] Every proper ideal of $ R $ is a relative $ \H_Y $-ideal.
		\item[(c)] $ R $ is a regular ring.
	\end{itemize}
\end{Thm}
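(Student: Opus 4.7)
The plan is a cyclic argument $(a) \Rightarrow (b) \Rightarrow (c) \Rightarrow (a)$. The implication $(a) \Rightarrow (b)$ is immediate from Proposition~\ref{Primary relative}(b).

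For $(c) \Rightarrow (a)$, I would establish the sharper statement that every ideal of $R$ is already a strong $\H_Y$-ideal; then any proper ideal $I$ is trivially a relative strong $\H_Y$-ideal with factor $J = R \not\subseteq I$. Given a finite $F = \{a_1,\dots,a_n\} \subseteq I$, von Neumann regularity supplies idempotents $e_i$ with $\Ge{e_i} = \Ge{a_i}$, and I would take $e := 1 - \prod_{i=1}^n(1 - e_i)$, an idempotent lying in $\Ge{F} \subseteq I$ with $h_Y(F) = h_Y(e)$. For $b \in kh_Y(e)$, at every $P \in Y$ either $e \in P$ (so $b \in P$) or $1-e \in P$ (so $(1-e)b \in P$); hence $(1-e)b \in \bigcap Y = \{0\}$ and therefore $b = eb \in \Ge{e} \subseteq I$, giving $kh_Y(F) \subseteq I$.

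The crux is $(b) \Rightarrow (c)$. Since $\Rad(R) \subseteq \bigcap Y = \{0\}$, $R$ is reduced, and regularity of $R$ is equivalent to $\An(a) + \Ge{a^2} = R$ for every $a \in R$. I would argue by contradiction: suppose some $a$ has $I := \An(a) + \Ge{a^2} \subsetneq R$. By (b) and Theorem~\ref{Equivalnet condition by relative ideal}(e), there exist $J \not\subseteq I$ and an $\H_Y$-ideal $K \supseteq I$ with $K \cap J \subseteq I$. Since $a^2 \in K$, $K$ is an $\H_Y$-ideal, and $h_Y(a^2) = h_Y(a)$, one obtains $a \in kh_Y(a) \subseteq K$; combined with $\An(a) \subseteq I \subseteq K$, this yields $\Ge{a} + \An(a) \subseteq K$. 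For any $b \in J$, the element $ab$ lies in $K \cap J \subseteq I$, so $ab = a^2 c + y$ with $y \in \An(a)$; multiplying by $a$ produces $a^2(b - ac) = 0$, and reducedness upgrades this to $a(b - ac) = 0$, so $b \in \Ge{a} + \An(a) \subseteq K$. Hence $J \subseteq K$, so $J = K \cap J \subseteq I$, contradicting $J \not\subseteq I$. This forces $I = R$, giving $a \in \Ge{a^2}$ for every $a$.

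The main obstacle is locating the correct test ideal. Taking $I = \Ge{a^2}$ alone only forces $J \subseteq \Ge{a} + \An(a)$, which is perfectly compatible with $J \not\subseteq \Ge{a^2}$ as soon as $\An(a) \neq 0$, so no contradiction arises. Enlarging to $I = \An(a) + \Ge{a^2}$ is what compels the accompanying $\H_Y$-ideal $K$ to absorb $\An(a)$, and thus the whole of $\Ge{a} + \An(a)$, which is precisely what traps $J$ inside $K$ and closes the argument.
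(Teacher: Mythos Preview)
Your argument is correct. The overall cycle matches the paper's, and your $(c)\Rightarrow(a)$ reproves, in a self-contained way, the result the paper simply quotes from \cite[Proposition 5.8]{badie2019extension} together with Proposition~\ref{Primary relative}.

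The genuine divergence is in $(b)\Rightarrow(c)$. The paper argues structurally: given a proper ideal $I$, it invokes Proposition~\ref{H_Y factor}(c) to produce a \emph{maximal} $\H_Y$-factor $J$ of $I$; if $J$ were proper, hypothesis (b) and Proposition~\ref{property of realtive}(a) would give $J$ an $\H_Y$-factor $K\supsetneq J$, and the transitivity clause Proposition~\ref{property of relative J}(d) would make $K$ an $\H_Y$-factor of $I$, contradicting maximality. Hence $J=R$, so $I$ is an $\H_{YR}$-ideal, i.e.\ an $\H_Y$-ideal, and regularity follows from the cited \cite[Proposition 5.8]{badie2019extension}. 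Your route is element-wise: you pick the specific test ideal $I=\An(a)+\Ge{a^2}$, extract from Theorem~\ref{Equivalnet condition by relative ideal}(e) an $\H_Y$-ideal $K\supseteq I$ and a would-be factor $J$, and then use $h_Y(a)=h_Y(a^2)$ and reducedness to force $J\subseteq\Ge{a}+\An(a)\subseteq K$, collapsing $J$ into $I$. Your approach is more elementary and entirely self-contained (no appeal to Zorn via Proposition~\ref{H_Y factor}(c), no external citation for the final step), and your closing paragraph explaining why $\Ge{a^2}$ alone fails is a nice diagnostic. The paper's approach, on the other hand, yields the intermediate statement ``every proper ideal is an $\H_Y$-ideal'' directly and uniformly, without singling out a particular $a$, which is arguably the cleaner conceptual takeaway.
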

\begin{proof}
	(a $ \Rightarrow $ b). It is clear.
	
	(b $ \Rightarrow $ c). Suppose that $ I $ is a proper ideal of $ R $, then $ I $ is a relative $ \H_Y $-ideal, so it has a $ \H_Y $-factor and therefore it has a maximal $ \H_Y $-factor $ J $, by Proposition \ref{H_Y factor}. If $ J \neq R $, then $ J $ is a $ H_Y $-ideal, thus has a $ \H_Y $-factor $ K $ containing $ J $, by  Proposition \ref{property of realtive}. Now Proposition \ref{property of relative J}, concludes that $ K $ is a $ \H_Y $-factor of $ I $, which contradicts the maximality of $ J $. Hence $ J = R $, and therefore $ I $ is a $ \H_Y $-ideal. Thus $ R $ is a regular ring, by  \cite[Proposition 5.8]{badie2019extension}.
	
	(c $ \Rightarrow $ a). Suppose that $ I $ is a proper ideal of $ R $, then \cite[Proposition 5.8]{badie2019extension}, concludes that $ I $ is a strong $ \H_Y $-ideal and therefore $ I $ is a strong $ \H_Y $-ideal, by Proposition \ref{Primary relative}.
\end{proof}


\begin{thebibliography}{20}
	\bibitem{aliabad2013relative} Aliabad, A.R., Azarpanah, F., and Taherifar, A., Relative $z$-ideals in commutative rings. Comm. Algebra 41, 1 (2013), 325–341.
	
	\bibitem{aliabad2013fixed} Aliabad, A.R. and Badie, M., Fixed-place ideals in commutative rings. Comment. Math. Univ. Carolin. 54, 1 (2013), 53–68.
	
	\bibitem{badie2019extension} Aliabad, A.R., Badie, M. and Nazari, S., An extension of $z$-ideals and $z^\circ$-ideals. Hacet. J. Math. Stat. to appear
	
	\bibitem{aliabad2011sz} Aliabad, A.R. and Mohamadian, R., On $sz^\circ$-ideals in polynomial rings. Comm. Algebra 39, 2 (2011), 701–717.
	
	\bibitem{Artico1981} Artico, G., Marconi, U., and Moresco, R., A subspace of $Spec(A)$ and its connexions with the maximal ring of quotients. Rend. Semin. Mat. Univ. Padova 64 (1981), 93–107.
	
	\bibitem{atiyah1969introduction} Atiyah, M. F., and Macdonald, I. G., Introduction to commutative algebra, vol. 2. Addison-Wesley Reading, MA, 1969.
	
	\bibitem{azarpanah1999z} Azarpanah, F., Karamzadeh, O., and Rezai, A. Aliabad, On $z^\circ$-ideals in $C(X)$. Fund. Math. 160, 1 (1999), 15–25.
	
	\bibitem{azarpanah2000ideals} Azarpanah, F., Karamzadeh, O., and Rezai, A. Aliabad, On ideals consisting entirely of zero divisors. Comm. Algebra 28, 2 (2000), 1061–1073.
	
	\bibitem{gillman1960rings} Gillman, L., and Jerison, M., Rings of continuous functions. Van. Nostrand Reinhold, New York, 1960.
	
	\bibitem{huijsmans1980z} Huijsmans, C., and De Pagter, B., On $z$-ideals and $d$-ideals in riesz spaces. i. Ind. Math. (Proceedings) 83, 2 (1980), 183–195.
	\bibitem{lam1991first} Lam, T., A first course in noncommutative rings. Springer, 1991.
	
	\bibitem{mason1973z} Mason, G., $z$-ideals and prime ideals. J. Algebra 26, 2 (1973), 280–297.
	
	\bibitem{stephen1970general} Willard, S., General Topology. Addison Wesley Publishing Company, New York, 1970.
\end{thebibliography}
\end{document}